\newcommand{\ot}{\leftarrow}
\newcommand{\ceil}[1]{{\lceil #1 \rceil}}
\theoremstyle{plain}
\newtheorem{theorem}  {Theorem}  [section]
\newtheorem{lemma}  [theorem]   {Lemma}
\newtheorem{corollary}[theorem] {Corollary}
\newtheorem{fact} [theorem] {Fact}
\newtheorem{proposition} [theorem] {Proposition}
\newtheorem{claim} [theorem] {Claim}
\theoremstyle{definition}
\newtheorem{remark}[theorem] {Remark}
\newtheorem{definition}[theorem] {Definition}
\newtheorem{example}[theorem] {Example}
\newcommand{\claimproof}{\renewcommand{\qedsymbol}{$\diamond$}}
\tikzset{
  vert/.style={circle, draw=black!100,fill=black!100,thick, inner sep=0pt, minimum size=2mm}, 
  empty/.style={draw=none, fill=none, minimum size=0mm, inner sep=0pt},
  labvert/.style={circle, draw=black!100,fill=none,thick, inner sep=2pt, minimum size=2mm},
  C/.style={circle, draw=black, fill=black, inner sep = 1pt},
  D/.style={circle, draw=black, fill=black!10, minimum size=5mm},
  arc/.style = {->,> = latex'},
  f/.style={very thick,decoration={markings,mark=at position 0.5 with {\arrow{>}}},postaction={decorate}},
  b/.style={very thick,decoration={markings,mark=at position 0.5 with {\arrow{<}}},postaction={decorate}}
}
 \newcommand{\Loop}[2][0]{\draw (#2) edge[min distance=5mm,in=#1*90+30,out=#1*90-30,looseness=4] (#2);}
 \newcommand{\Vlabel}[4][.3cm]{\draw node[empty, #3 of = #2, node distance = #1] () {$#4$};}
\DeclareMathOperator{\Hom}{Hom}
\DeclareMathOperator{\Mon}{Mon}
\DeclareMathOperator{\Neq}{Neq}
\DeclareMathOperator{\Recon}{Recon}
\DeclareMathOperator{\Aux}{A}
\renewcommand{\hat}{\widehat}
\begin{document}

\thispagestyle{empty}  
\title{Reflexive Digraph Reconfiguration by Orientation Strings}

\author{David Emmanuel Pazmi\~{n}o Pullas}
\address{Université du Québec à Montréal}
\email{pazmino\_pullas.david\_emmanuel@courrier.uqam.ca}
\thanks{}

\author{Mark Siggers}
\address{Kyungpook National University Mathematics Department,
80 Dae-hak-ro, Daegu Buk-gu, South Korea, 41566}
\email{mhsiggers@knu.ac.kr}
\thanks{The second author is supported by Korean NRF Basic Science Research Program (2022-R1D1A1A09083741) funded by the Korean government (MEST) and the Kyungpook National University Research Fund.}

\keywords{Complexity, digraph homomorphism, reconfiguration, log-space}           
\subjclass[2020]{05C85,05C15}

\begin{abstract}
  The reconfiguration problem for homomorphisms of digraphs to a reflexive digraph cycle, which amounts to deciding if a `reconfiguration graph' is connected, 
  is known to by polynomially time solvable via a greedy algorithm based on certain topological requirements.   Even in the case that the instance digraph is a cycle of length $m$, the algorithm, being greedy, takes time $\Omega(m^2)$.  
  Encoding homomorphisms between two cycles as a relation on strings that represent the orientations of the cycles, we give a characterization  of the components of the reconfiguration graph in terms of these strings.
  The component under this characterization can be computed in linear time and logarithmic space.  In particular, this solves the reconfiguration problem for homomorphisms of cycles to cycles in log-space.       
\end{abstract}

\maketitle

\section{Introduction}

A digraph $G$ is a binary relation `$\to$` on a set $V(G)$ of vertices. 
An ordered pair $uv$ is an {\em arc} of $G$ if $u \to v$, and is an {\em edge}, denoted $u \sim v$ if $uv$ or $vu$ is an arc. The arc $uu$ is a {\em loop.} 

The {\em underlying graph} of a digraph $G$ is the graph we get by replacing non-loop arcs with edges. A digraph is a {\em cycle,} a {\em path} or a {\em tree} if its underlying graph is. The length or girth of a digraph is that of its underlying graph, in particular cycles have length at least $3$.  
 
The $\Hom$-graph $\Hom(G,H)$ for two digraphs $G$ and $H$ is the digraph whose vertex set is the set of homomorphisms of $G$ to $H$, and in which $\phi \to \phi'$, for two homomorphisms $\phi$ and $\phi'$, if for all pairs $u,v$ of vertices of $V(G)$, $u \to v$ implies $\phi(u) \to \phi'(v)$. The {\em homomorphism reconfiguration problem $\Recon(H)$} for a digraph $H$ asks, for an instance $(G,\phi,\psi)$ consisting of a digraph $G$ and two homomorphisms $\phi,\psi \in \Hom(G,H)$, if there is a path between  $\phi$ and $\psi$ in $\Hom(G,H)$.  Such a path is called a 
{\em reconfiguration} of $\phi$ to $\psi$.


A digraph is {\em reflexive} if every vertex $v$ has a loop.  In this paper we consider the problem $\Hom(C,D)$ where both the target $C$, and the instance $D$ are reflexive digraph cycles.   The use of reflexive graphs $D$ as the target plays an obvious role. Adjacent vertices in the instance $C$ can map to the same vertex in $D$ under a homomorphism.  The use of reflexive instances $C$ plays a slightly less obvious role.   Two homomorphisms $\phi, \psi \in \Hom(G,H)$ that differ only on a single vertex $v$ are always adjacent if $v$ does not have a loop, but are only adjacent for reflexive $v$ if $\phi(v) \sim \psi(v)$.  This is also less important-- \cref{rmk:looplessC} explains how most of our results hold for instances $C$ that are not necessarily reflexive.

In \cite{BLS21} it was shown that for any reflexive cycle $D$, $\Recon(D)$ is polynomial time solvable for reflexive instances $G$.  Indeed, it was shown that there is a path between two homomorphism $\phi$ and $\psi$ in $\Hom(G,D)$ if and only if for every cycle $C \leq G$ there was a path between the restrictions of $\phi$ and $\psi$ to $C$ in $\Hom(C,D)$, and that this could be determined (for all cycles $C$ at the same time) in polynomial time.
 
 From this, we have as a special case that one can determine in time polynomial in $|V(C)|$, for digraph cycles $C$ and $D$, if two maps in $\Hom(C,D)$ are in the same component. The algorithm in \cite{BLS21} was not optimized, especially not for cycle instances $C$,  but for an instance $C$ of length $m$,  it would take time $O(m^2)$. While a polynomial algorithm is usually our goal in such problems, this was a little unsatisfying: one should have a more explicit description of the cycles $C$ for which $\Hom(C,D)$ is disconnected, and more generally for such cycles, should have a more explicit description of $\Hom(C,D)$.  To clarify what we mean by `a more explicit description', we now give the restriction of our main result to reflexive symmetric cycles.

 The {\em wind} $w(\phi)$ (see Definition \ref{def:wind})  of a homomorphism $\phi: C \to D$ of reflexive symmetric cycles  counts the number of times $\phi$ winds the cycle $C$ around $D$.  This is an integer between $-m/n$ and $m/n$ where 
$C$ has length $m$ and $D$ has length $n$.  
 When $D$ has length $n \geq 4$, it is not hard to show that the wind is constant on components of $\Hom(C,D)$, and so $\Hom(C,D)$ is the disjoint union of the subgraphs $\Hom_w(C,D)$ induced by maps of wind $w$, as $w$ runs from $-m/n$ to $m/n$.  The following is quite simple to prove.

\begin{fact}\label{fact:mainsym}
  Let $C$ and $D$ be reflexive symmetric cycles of lengths $m$ and $n$ respectively, with $4 \leq n \leq m$.  The graph $\Hom_w(C,D)$  consists of  
  \begin{enumerate}
    \item a single component if $0 \leq |w| < m/n$, 
    \item $n$ isolated vertices if $0 < |w| = m/n$,
   \end{enumerate}
  and is empty if $m/n < |w|$.   
\end{fact}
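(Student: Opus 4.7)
The plan is to dispatch the three cases separately, using lifts to the universal cover. I identify $V(D) \cong \mathbb{Z}/n\mathbb{Z}$ and lift each homomorphism $\phi: C \to D$ to a walk $\tilde\phi: \mathbb{Z} \to \mathbb{Z}$ satisfying $\tilde\phi(i+1) - \tilde\phi(i) \in \{-1,0,+1\}$ and $\tilde\phi(i+m) = \tilde\phi(i) + wn$, where $w$ is the wind. Summing $|\tilde\phi(i+1) - \tilde\phi(i)| \le 1$ over a full period of $C$ gives $|wn| \le m$, immediately disposing of the case $|w| > m/n$.

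For $|w| = m/n$, equality in this sum forces every step to be $+1$ (if $w>0$) or every step to be $-1$, so $\phi$ is a covering map completely determined by $\phi(v_0) \in V(D)$; this yields exactly $n$ homomorphisms for each such value of $w$. I would show each is isolated by supposing $\phi \to \phi'$ and intersecting the three constraints $\phi'(v_i) \sim \phi(v_{i-1})$, $\phi'(v_i) \sim \phi(v_i)$, and $\phi'(v_i) \sim \phi(v_{i+1})$ coming from the two edges and the loop at $v_i$. In a reflexive cycle of length $n \ge 4$, the closed neighborhoods of three consecutive vertices meet in exactly one point, namely the middle one; this forces $\phi'(v_i) = \phi(v_i)$ for every $i$, so $\phi' = \phi$.

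The substantive case is connectivity of $\Hom_w(C,D)$ for $0 \le |w| < m/n$. My plan is to define a canonical representative $\phi_w^\ast$, for instance the map whose lift increases by $+1$ on the first $|w|n$ edges of $C$ and is constant on the remaining $m - |w|n$ edges (with sign adjusted if $w<0$), and to show every homomorphism of wind $w$ reconfigures to $\phi_w^\ast$. The strict inequality $|w|<m/n$ ensures that the walk $\tilde\phi$ contains genuine slack: either a stationary step, or a pair of opposite-signed steps. I would exploit this slack to perform elementary reconfiguration moves of the form $\tilde\phi(v_i) \mapsto \tilde\phi(v_i) \pm 1$ at a single vertex, verify that each such move gives a valid adjacency in $\Hom_w(C,D)$, and use a monotone potential such as $\sum_i |\tilde\phi(v_i) - \tilde\phi_w^\ast(v_i)|$ to guarantee termination.

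The main obstacle will be the bookkeeping in this last step: at each stage where $\phi \ne \phi_w^\ast$ one must locate a legal single-vertex move that strictly reduces the potential, which requires a case analysis on the shape of $\tilde\phi$ near the region of disagreement with $\tilde\phi_w^\ast$ and a careful appeal to the slack guaranteed by $|w|<m/n$. Once this local step is justified, connectedness of $\Hom_w(C,D)$ follows.
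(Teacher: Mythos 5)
Your approach is genuinely different from the paper's. The paper does not prove Fact~\ref{fact:mainsym} directly at all: it is stated in the introduction as motivation and then, after Theorem~\ref{thm:wind_components} is established, deduced in one line by specialising to $C = *^m$ and $D = *^n$ (so $\sqrt{D} = *$, $r = n$, $s = 1$, $R = m$). You instead give a self-contained argument via lifts $\tilde\phi:\mathbb{Z}\to\mathbb{Z}$ to the universal cover, which is elementary and avoids the whole orientation-string machinery.

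Your treatment of the emptiness case ($|w| > m/n$) and the isolation case ($|w| = m/n$) is clean and complete; in particular the observation that $\phi\to\phi'$ pins $\phi'(v_i)$ into the intersection of three closed neighbourhoods, which is a single vertex in a reflexive cycle of length $n\geq 4$, is exactly right.

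The connectivity case $0 \le |w| < m/n$ is where the real content lies, and you leave the decisive step — that from any $\phi\neq\phi_w^\ast$ there is a legal single-vertex move strictly decreasing the potential — as an acknowledged obstacle. Two things to watch. First, the potential $\sum_i |\tilde\phi(v_i)-\tilde\phi_w^\ast(v_i)|$ depends on a coherent choice of lift; you must fix, say, $\tilde\phi(0)\in\{0,\dots,n-1\}$ and be prepared for the optimal representative of $\phi_w^\ast$ to shift (by a multiple of $n$) as you reconfigure past $v_0$. Second, a vertex realising the maximum of $\tilde\phi-\tilde\phi_w^\ast$ need not be a local maximum of $\tilde\phi$, so the naive ``push the worst vertex'' move can be illegal on plateaus; you would need to locate an extreme point of the plateau, and argue the slack $m - |w|n > 0$ guarantees one exists and can move. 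This is all fixable, but a shorter route — closer in spirit to what the paper does in general via Lemma~\ref{uptomono} — is to first push $\phi$ up to a monotone map (always possible), then observe that the monotone maps of wind $w$ with $|w| < m/n$ form a single cycle under shifting the $wn$ increasing edges, each shift being a legal reconfiguration step thanks to the stationary slack. That would close the gap without the potential bookkeeping.
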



When $C$ and $D$ are digraphs the situation is not so simple. The lengths of $C$ and $D$ are not enough to determine everything. Indeed, even in the case that
$0 < w = 1 < m/n$, the digraph $\Hom_1(C,D)$ can be empty, connected, or consist of many not necessarily trivial components.
Our main result is a digraph version of the above fact.  To state it, we need further definitions. Before we give them, we introduce them informally with an example.

 \begin{figure}
 \begin{center}
 \begin{tikzpicture}[very thick,scale=.7]

        \foreach \i in {0,1,2,3}{\draw (180-90*\i:2) node[D] (d\i){$\i$};}
        \foreach \orient[count=\i from 0][evaluate=\i as  \j using {int(int(round(mod(\i+1, 4))))}] in {f,b,f,b}{\draw[\orient] (d\i) -- (d\j);}
        \foreach \v/\p in {d0/0, d1/3, d2/2,d3/1}{\Loop[\p]{\v}};
       
        \foreach \i/\degree/\radius in {0/180/3, 1/95/3, 2/0/2.8, 3/85/3, 4/0/3.2, 5/275/3, 6/265/3, 7/175/4, 8/100/4, 9/90/4, 10/80/4, 11/0/4, 12/275/4, 13/265/4, 14/185/4}{\draw (\degree:\radius) node[C] (c\i){};}
        \foreach \orient[count=\i from 0][evaluate=\i as  \j using {int(int(round(mod(\i+1, 15))))}] in {f,b,f,b,f,b,b,f,f,b,b,f,f,b}{\draw[\orient] (c\i) -- (c\j);}
        \draw[f](c0) --  (c14);

        \Vlabel{c3}{right}{v_1} 
        \Vlabel{c4}{right}{v_2}
        \Vlabel{c10}{right}{v_8}

\end{tikzpicture}
 \caption{Example of a homomorphism of a digraph $15$-cycle $C$ to a reflexive digraph $4$-cycle $D$}\label{fig:1}
\end{center}
\end{figure}

\begin{example}
Consider the wind $2$ homomorphism of a digraph $15$-cycle $C$ to a reflexive digraph $4$-cycle $D$ on the vertex set  $\{0,1,2,3\}$ shown in Figure \ref{fig:1}.
As $D$ is reflexive, consecutive vertices of $C$ can map to the same vertex.     

One sees that the vertex $v_1$ mapped to $1$ could be reconfigured to $2$;  this means that the shown homomorphism is adjacent to the homomorphism we get from it by remapping $v_1$ to the vertex $2$. 
In fact, it will follow from Lemma \ref{lem:pushup} that one can verify this simply by verifying that the resulting remapping is a homomorphism.
Very few other single vertices can be reconfigured, $v_8$ can be reconfigured up from $1$ to $2$ and $v_6$ down from $1$ to $0$.  The vertex $v_4$ can be configured up, and $v_{10}$ down.   After some reductions, we will mostly talk about moving vertices up and ignore movements down.  After $v_1$ moves up to $2$, it could then move to $3$, then $v_2$ could move  up to $3$ allowing $v_3$ and $v_4$ to move up to $0$ together, though neither can move on its own.  With some patience one can show, greedily, that we can keep moving vertices up-- the reconfiguration graph has large cycles. 

Our main theorem allows us to discover this without the need for such patience.  The cycle $D$ from the example will be represented by the orientation string $+-+-$, defined in the next section, and its primitive root $\sqrt{D}$, defined just before the statement of Theorem \ref{thm:wind_components}, will be $+-$. We will write $D = \sqrt{D}^r$ where $r = 2$.  As the map in the figure has wind $w = 2$, its wind around $\sqrt{D}$ will be $r\cdot w = 4$.  The cycle $C$ is described by the orientation string $+-+-+--++--++-$ from which we can remove symbols to get $+-+-+-+-+-$, which is $\sqrt{D}^R$ where $R = 5$. As this $R = 5$ is greater than $r \cdot w = 4$, we have $w < R/r$, which puts us in case (2) of Theorem \ref{thm:wind_components}. This tells us that the wind $2$ subgraph $\Hom_2(C,D)$ of $\Hom(C,D)$ is cyclic, meaning that from any map $\phi$ we can get back to $\phi$ in $\Hom_2(C,D)$ by a non-trivial cycle of reconfigurations up.   
\end{example}

\subsection*{Background Definitions and Results}

 As is observed in Fact 2.2 of \cite{BLS18}, a reflexive digraph $D$ of length $3$ containing a transitive triangle is {\em contractible} and so $\Hom(G,D)$ is connected for all $G$. Thus $\Recon(D)$ is trivial-- all instances are YES instances. We thus restrict our attention to the case that $D$ is {\em non-contractible}: it has length at least $4$ or is a directed $3$-cycle.

 Denoting a digraph cycle $C$ as  $C  = c_0c_1\dots c_{m-1}c_0$ specifies that its vertex set is $\{c_0, \dots, c_{m-1}\}$ and that $v_iv_{i+1}$ is an edge for $i =0, \dots, m-1$.
 A cycle is assumed to have an underlying orientation in the direction of increase of the indices of the vertex labels.
 For the particular cycle  $D = (0)(1) \dots (n-1)(0)$ of length $n$, we use integers modulo $n$ as the vertex set rather than as indices of the vertices. 
 
 With respect to this underlying ordering, the edge $c_ic_{i+1}$ is {\em forward} if $c_i \to c_{i+1}$, {\em backward} if $c_i \ot c_{i+1}$, or {\em symmetric} if $c_i \to c_{i+1}$ and $c_i \ot c_{i+1}$.  The {\em algebraic length} of an oriented cycle $C$ is the number of forward edges minus the number of backward edges. If a cycle  has any symmetric edges, the algebraic length is not defined, but it contains cycles of various algebraic lengths.  
   For a homomorphism $\phi$ of a cycle $C = c_0 \dots c_{m-1}c_0$ to a cycle $D = (0)(1) \dots (n-1)(0)$ an edge $c_ic_{i+1}$ is   {\em increasing, stationary} or {\em decreasing} under $\phi$ according to whether $\phi(c_{i}) - \phi(c_{i+1})$ is $-1,0$ or $1$. The {\em increase} of the homomorphism is the number of increasing edges  minus the number of decreasing edges.  

\begin{definition}\label{def:wind}
The {\em wind} of a homomorphism $\phi: C \to D$ of digraph cycles is the increase of $\phi$ divided by the length of $D$.
\end{definition}

The wind of a map $\phi: C \to D$,  which is clearly an integer,  depends on the underlying orientation of $C$ and $D$.  Reversing the orientation of  either $C$ or $D$ multiplies the wind by $-1$. 
The following fact, which is easy to check, is given (in a bit more generality) as Lemma 3.4 of  \cite{BLS21}.         
\begin{fact}\label{WindInvariance}
 For reflexive digraph cycles $C$ and $D$ where $D$ is non-contractible, the wind of maps of  $\Hom(C,D)$ is constant over components.
 \end{fact}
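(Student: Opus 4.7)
The plan is to prove that the wind is constant along each directed edge $\phi \to \phi'$ of $\Hom(C,D)$; constancy on components (which are connected in the underlying graph) then follows by applying the same argument to the reverse edge $\phi' \to \phi$. The natural tool is to lift both maps to the universal cover $\mathbb{Z}$ of $D$. Given any $\phi : C \to D$, fix $\tilde\phi(c_0) \in \mathbb{Z}$ lifting $\phi(c_0)$ and extend along $C$ by $\tilde\phi(c_{i+1}) = \tilde\phi(c_i) + a_i$, where $a_i \in \{-1,0,1\}$ is the lift of $\phi(c_{i+1}) - \phi(c_i) \pmod n$; then the increase of $\phi$ is exactly $\tilde\phi(c_m) - \tilde\phi(c_0)$, and this equals $n$ times the wind.

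For an edge $\phi \to \phi'$, I would choose initial lifts with $\tilde\phi'(c_0) - \tilde\phi(c_0) \in \{-1,0,1\}$, which is possible because the reflexive loop at $c_0$ together with the Hom-edge condition gives $\phi(c_0) \to \phi'(c_0)$ in $D$, whence their cyclic distance is at most one. The heart of the argument is the inductive claim $\tilde\phi'(c_i) - \tilde\phi(c_i) \in \{-1,0,1\}$ for all $i$. In the step from $i$ to $i+1$, whichever orientation $C$ gives to the edge $c_ic_{i+1}$, the Hom-edge condition applied to that oriented arc produces a mixed difference (such as $\tilde\phi'(c_{i+1}) - \tilde\phi(c_i)$) a priori in $\{-2,\ldots,2\}$ yet congruent to one of $-1,0,1$ modulo $n$; the assumption $n \geq 4$ then confines the integer value itself to $\{-1,0,1\}$. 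A second application of the same principle, this time invoking the reflexive loop at $c_{i+1}$, propagates the bound to $\tilde\phi'(c_{i+1}) - \tilde\phi(c_{i+1})$, closing the induction.

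Finally, comparing endpoints gives $n(w(\phi') - w(\phi)) = (\tilde\phi'(c_m) - \tilde\phi(c_m)) - (\tilde\phi'(c_0) - \tilde\phi(c_0))$, an integer of absolute value at most $2$, which for $n \geq 4$ forces equality of winds. The remaining case, $D$ a directed $3$-cycle, proceeds by the same template: $C$ is necessarily a directed cycle, so $a_i, b_i \in \{0,1\}$; the loop and edge constraints confine lifted differences to $\{0,1\}$; and mod-$3$ arithmetic excludes the single forbidden residue at each step, yielding $3(w(\phi')-w(\phi)) \in \{-1,0,1\}$ and so equality. I expect the main obstacle to be the bookkeeping over the three possible orientations of each edge of $C$ (forward, backward, or symmetric) combined with the orientation of $D$, but since the loop condition is symmetric in $\phi$ and $\phi'$ and every oriented arc of $C$ yields exactly one mixed-difference bound per step, the induction structure is the same in every case.
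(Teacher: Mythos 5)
The paper does not actually prove this fact; it is stated as ``easy to check'' and cited from Lemma~3.4 of \cite{BLS21}, so there is no in-paper proof to compare against. Your lifting argument is a correct and self-contained proof for the case $n \geq 4$: the key inductive invariant $\tilde\phi'(c_i) - \tilde\phi(c_i) \in \{-1,0,1\}$ is established exactly as you describe (mixed difference bounded in $\{-2,\dots,2\}$ by adding two increments in $\{-1,0,1\}$, then reduced to $\{-1,0,1\}$ because $\pm 2$ is not congruent to $-1,0,1$ modulo $n$ when $n \geq 4$; the loop at $c_{i+1}$ closes the step), and the endpoint comparison $n(w(\phi')-w(\phi)) = (\tilde\phi'(c_m)-\tilde\phi(c_m))-(\tilde\phi'(c_0)-\tilde\phi(c_0)) \in \{-2,\dots,2\}$ forces equality.

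The one inaccuracy is in your treatment of the directed $3$-cycle. You assert that $C$ must itself be a directed cycle, which is false: the reflexive cycle $c_0 \to c_1 \ot c_2 \to c_3 \ot c_0$ maps to the directed $3$-cycle (send $c_1 \mapsto 1$ and everything else to $0$), and in general $C$ may contain symmetric edges, which the $3$-cycle's loops absorb. The consequence is that $a_i$ is not always in $\{0,1\}$; for a backward edge $c_{i+1}\to c_i$ one gets $a_i \in \{-1,0\}$, and for a symmetric edge $a_i = 0$. Fortunately the template survives the fix: in every orientation case the arc constraint and the loop at $c_{i+1}$ each exclude one residue modulo $3$, and since every lifted difference that arises is a priori confined to an interval of length at most three, the excluded residue pins it down. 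The conclusion $3(w(\phi')-w(\phi)) \in \{-1,0,1\}$ still follows. So the proof is essentially sound; you should just replace the false structural claim about $C$ with a short case split on the orientation of $c_ic_{i+1}$.
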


 As $C$ is reflexive, we have for any arc $\phi\phi'$ of $\Hom(C,D)$ and any vertex $c_i$ of $C$ that $\phi(c_i)\phi'(c_i)$ is an arc of $D$,  and so $\phi'(c_i)$ is in $\{ \phi(c_i) , \phi(c_i) \pm 1 \}$.  The vertex $c_i$ {\em moves up} via $\phi\phi'$ if  $\phi'(c_i) = \phi(c_i) + 1$ and {\em moves down} if $\phi'(c_i) = \phi(c_i) - 1$. The edge $\phi\phi'$ is an {\em up edge} if all vertices that move, move up, and is a {\em down edge} if all vertices that move, move down.  Clearly, if $\phi\phi'$ is an up edge, then $\phi'\phi$ is a down edge.  An edge of  $\Hom(C,D)$ being a forward, backward or symmetric edge should not be confused with it being an up or down edge. 

  A component of $\Hom(C,H)$ is {\em cyclic} if for any two maps $\phi$ and $\phi'$ in the component, there is a path of up edges from $\phi$ to $\phi'$.  
 We call the component `cyclic' because in such a component one can get from $\phi$ back to $\phi$ by a non-trivial path of up edges. 
The single component in part (1) of Fact \ref{fact:mainsym} is a cyclic component.

The following adapts notation and ideas that were developed for paths in \cite{LLP24} to talk about  homomorphisms between digraph paths and to prove the so-called S\l upeckiness of all non-contractible reflexive cycles.   

 A cycle $C = c_0 \dots c_{m-1} c_0$ can be represented by its {\em orientation string} $x_1\dots x_m$. This is the string of length $m$ over the alphabet $\{-,+,*\}$ whose $i^{th}$ letter $x_i$ is $-, +,$ or $*$  depending on whether the $i^{th}$ edge $c_{i-1}c_{i}$ is a backward arc, a forward arc, or a symmetric arc. 
So that this orientation string uniquely represents $C$, we must think of $C$ not only with an underlying orientation, but also as a {\em pointed cycle} $C$ with base-point $c_0$.  In particular, we distinguish $C$ from its  {\em $i^{th}$ shift} $\sigma^i(C) = c_ic_{i+1} \dots c_{i-1}c_i$ which has base-point $c_i$.

  For example, a symmetric $4$-cycle is denoted $ \ast \ast \ast\, \ast$, a forward directed $4$-cycle is  $++++$, and the oriented pointed $4$-cycles $+-+-$ and $-+-+$ are called alternating $4$-cycles; they are the same as cycles, but as pointed cycles, are shifts of each other.

The authors of \cite{LLP24} used a partial ordering of orientation strings to give a useful description of homomorphisms between pointed paths;  we adapt this to pointed cycles.

\cite{Siggers}

Define a partial ordering on the set of all finite orientation strings as follows. For strings $D = y_1 y_2 \dots  y_n$ and $C = x_1 x_2 \dots x_m$, with $n \leq m$ let $D \leq^* C$, and call $D$ a {\em $*$-substring} of $C$,  if there is a strictly increasing function
  \[ \alpha = (\alpha(1), \dots, \alpha(n)): [n] \to  [m], \]
 called a {\em selection function}, such that 
we can get $D$ from the substring $x_{\alpha(1)}\dots x_{\alpha(n)}$ of $C$ by possibly changing letters to $*$. This implies, in particular, that $x_{\alpha(i)}$ is the same letter as $y_i$ unless $y_i$ is a $*$.      

For example, we would have that 
  \[ **+-* \leq^* -++-- \leq^* -++--+-; \]
the selection function for the first inequality is $(1,2,3,4,5)$ while for the 
second equality, there are three different selection functions showing that $-++--$ is a $*$-substring of $-++--+-$; they are $(1,2,3,4,5)$, $(1,2,3,4,7)$, and $(1,2,3,5,7)$.

A homomorphism of cycles $\phi: C \to D$ is {\em increasing} if every edge of $C$ under $\phi$ is increasing or stationary, but not all are stationary,  it is {\em decreasing} if every edge is decreasing; it is {\em monotone} if it is increasing or decreasing.  

The following should be clear and comes immediately from an analogous result about pointed paths in \cite{LLP24}: if $\alpha$ is the selection function that finds $D$ as a $*$-substring $x_{\alpha(1)}x_{\alpha(2)} \dots x_{\alpha(n)}$ of $C$, then there is a monotone wind $1$ homomorphism of $C$ to $D$ in which the increasing edges are exactly those edges $c_{(\alpha(i)-1)}c_{\alpha(i)}$ selected by $\alpha$.
\begin{fact}
  There is wind $1$  monotone homomorphism $\phi: C \to D$ of cycles with $\phi(c_0) = 0$ if and only if $D$ is a $*$-substring of $C$.    
\end{fact}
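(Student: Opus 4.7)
The plan is to construct an explicit bijective correspondence between selection functions $\alpha:[n]\to[m]$ witnessing $D\leq^* C$ and wind-$1$ monotone homomorphisms $\phi:C\to D$ with $\phi(c_0)=0$, and then show that at each step the homomorphism constraints and the substring constraints coincide.

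For the ($\Rightarrow$) direction, I start by unwinding what a wind-$1$ monotone $\phi$ must look like. Because $\phi$ is monotone with positive wind, no edge of $C$ is decreasing under $\phi$; because the increase of $\phi$ equals $n$, exactly $n$ of the $m$ edges of $C$ are increasing under $\phi$ and the remaining $m-n$ are stationary. Let $\alpha(k)\in[m]$ be the index of the $k$-th increasing edge in the order they appear along $C$, so $\alpha:[n]\to[m]$ is strictly increasing. Since $\phi(c_0)=0$, the $k$-th increasing edge $c_{\alpha(k)-1}c_{\alpha(k)}$ maps onto the edge of $D$ from $(k-1)$ to $(k)$, which has orientation $y_k$. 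The homomorphism condition on this single edge then reduces to a local compatibility between $x_{\alpha(k)}$ and $y_k$, and a short $3\times 3$ case analysis identifies this compatibility with the condition ``$y_k$ is obtained from $x_{\alpha(k)}$ by possibly changing the letter to $*$''---precisely what $\alpha$ being a selection function requires.

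For the ($\Leftarrow$) direction, given a selection function $\alpha$ I reverse the construction: set $\phi(c_j):=|\{k\in[n]:\alpha(k)\leq j\}|\pmod n$. Then $\phi(c_0)=0$, and by construction each edge $c_{j-1}c_j$ is either stationary (when $j\notin\alpha([n])$, which is valid because $D$ is reflexive) or increasing from $(k-1)$ to $(k)$ (when $j=\alpha(k)$, where the same local compatibility between $x_{\alpha(k)}$ and $y_k$ ensures $\phi$ is a homomorphism on that edge). A short verification handles the closing edge $c_{m-1}c_0$ in the two subcases $\alpha(n)=m$ and $\alpha(n)<m$, ensuring that $\phi$ arrives back at $0$ modulo $n$ and has wind exactly $1$.

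The main (very mild) obstacle is purely bookkeeping: correctly handling the cyclic wraparound at the closing edge of $C$, aligning the indexing of increasing edges in $C$ with the edges of $D$, and carrying out the small case analysis matching the three orientation symbols in $C$ with those in $D$. Since the analogous statement for pointed paths is already established in \cite{LLP24} via essentially the same case analysis, the only genuinely new content is the verification that the cycle closes up correctly.
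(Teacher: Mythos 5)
Your proof is correct and follows exactly the correspondence the paper sketches (selection functions $\leftrightarrow$ monotone wind-$1$ maps, with increasing edges being precisely the $\alpha$-selected ones); the paper itself declares the fact immediate from the pointed-path analogue in \cite{LLP24} and gives no further argument. You have simply filled in the bookkeeping details the paper omits, including the cyclic closing edge, and your local compatibility analysis matches the intended one.
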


We state an obvious extension of this to account for wind, and for homomorphisms taking $c_0$ to different vertices of $D$. 
For orientation strings $D = y_1 \dots y_n$ and $Z = z_1 \dots z_p$, $DZ$ is the concatenation $y_1 \dots y_nz_1 \dots z_p$ of $D$ and $Z$, $D^w$ for positive integer $w$ is the concatenation of $w$ copies of $D$. 
\begin{fact}
  There is wind $w$  monotone homomorphism $\phi: C \to D$ of cycles with $\phi(c_0) = i$ if and only if $\sigma^i(D^w)$ is a $*$-substring of $C$; ie, if and only if  $\sigma^i(D^w) \leq^* C$.  
\end{fact}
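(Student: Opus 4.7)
The plan is to directly generalize the preceding wind-$1$ fact by tracking the increasing edges of $C$ and matching them against the sequence of edges traversed by going around $D$ exactly $w$ times starting at vertex $i$. I would treat the case $w > 0$ (increasing monotone); the case $w < 0$ follows by reversing the orientation of $D$, and $w = 0$ does not admit a monotone map.

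For the forward direction, suppose $\phi : C \to D$ is an increasing monotone homomorphism with wind $w > 0$ and $\phi(c_0) = i$. Since $\phi$ is increasing monotone, its non-stationary edges are exactly its increasing edges, and since the wind is $w$ there are $wn$ of them, where $n = |V(D)|$. List the positions of these edges in $C$ in order as $\alpha(1) < \dots < \alpha(wn)$, giving a strictly increasing function $\alpha : [wn] \to [m]$. The vertex values satisfy $\phi(c_{\alpha(j)}) \equiv i + j \pmod n$, so the $j$-th increasing edge $c_{\alpha(j)-1} c_{\alpha(j)}$ is mapped to the edge $(i+j-1)(i+j)$ of $D$. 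Homomorphism compatibility on this edge forces the letter $x_{\alpha(j)}$ of $C$'s orientation string to agree with the letter $y_{(i+j) \bmod n}$ of $D$'s orientation string, except that any letter in $C$ is permitted when the $D$-letter is $*$. Since $\sigma^i(D^w)$ is precisely $y_{i+1} y_{i+2} \dots y_{i+wn}$ (indices mod $n$), this is exactly the condition that $\alpha$ is a selection function witnessing $\sigma^i(D^w) \leq^* C$.

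For the backward direction, given a selection function $\alpha$ for $\sigma^i(D^w) \leq^* C$, I would define $\phi(c_k) = i + |\{\, j : \alpha(j) \leq k \,\}| \pmod n$ and verify that it is a homomorphism. Edges of $C$ at positions outside $\alpha([wn])$ are stationary under $\phi$ and are preserved by the loops guaranteed by reflexivity of $D$. Edges at positions $\alpha(j)$ are mapped to $(i+j-1)(i+j)$ in $D$, and the compatibility between $x_{\alpha(j)}$ and $y_{(i+j) \bmod n}$ built into the $*$-substring relation is exactly what the homomorphism condition requires. By construction $\phi$ is increasing monotone of wind $w$ with $\phi(c_0) = i$.

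The only real bookkeeping is the case analysis of letter compatibility in both directions of the biconditional: a $+$ or $-$ in $D$ must be matched exactly in $C$, while a $*$ in $D$ admits any of $\{+,-,*\}$ in $C$. This should follow routinely from the definition of $\leq^*$ together with the definition of digraph homomorphism and reflexivity of $D$, so no serious obstacle is expected.
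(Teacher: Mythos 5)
Your proof is correct and is essentially the direct argument the paper gestures at: the paper states this Fact without proof as an ``obvious extension'' of the wind-$1$ case, citing the explicit correspondence (from \cite{LLP24}) between selection functions and the positions of increasing edges of a monotone map, and your argument simply spells out that correspondence for general $w$ and base-point $i$. The one small point worth making explicit in the backward direction is the wrap-around edge $c_{m-1}c_0$: the formula $\phi(c_k)=i+|\{j:\alpha(j)\le k\}|\pmod n$ is consistent there precisely because the total count is $wn\equiv 0\pmod n$, so the map closes up correctly whether or not $m\in\alpha([wn])$.
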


 \begin{example}\label{ex1}
     Where $C = x_1 \dots x_7 = -++--+-$ and $D =  -++--$ there are four wind $1$ monotone homomorphisms of $C$ to $D$.
     They are shown in \Cref{fig:ex1}. 
\begin{figure}
 \[   \begin{array}{c|ccccccc|c}
\multirow{2}{*}{C} & x_1 & x_2 & x_3 & x_4 & x_5 & x_6 & x_7 &\multirow{2}{*}{\mbox{ Selection Function }}\\\cline{2-8}
             & -  & +  & +  & - & - & + & - & \\\hline
   \phi_1 & -  & +  & +  &   & -  &    & - & (1,2,3,5,7)  \\
   \phi_2 & -  & + & +  & -  &    &    & - & (1,2,3,4,7) \\
   \phi_3 & -  & +  & +  & - & - &  &  &  (1,2,3,4,5) \\
   \phi_4 &   & +  & +  & - & - &  & - & (2,3,4,5,7) \\
\end{array} \]\caption{Homomorphisms of $C = -++--+-$ to  $D =  -++--$}\label{fig:ex1}
\end{figure}
For the first three, we find $D$ as a $*$-substring of $C$, so map $c_0$ to $0$.  
 The selection function for $\phi_1$ is $\alpha = (1,2,3,5,7)$. To determine where, say, the vertex $c_5$ maps, we observe that four of the edges that come between it and $c_0$ are increasing, the edges $x_1, x_2,x_3$ and $x_5$ are, so $c_5$ maps to $4$. 

For the homomorphism $\phi_4$, we find $\sigma^1(D) = ++---$ as a $*$-substring 
of $C$.  This maps $c_0$ to $1$-- it maps $x_6$ to $0$, $x_7$ to $1$, $x_0$ to $1$, $x_2$ to $1$, $x_2$ to $2$, etc. The selection function $\alpha$ for $\phi_4$ is the selection function of $\sigma^1(D)$ as a $*$-substring of $C$, not of $D$, so it is 
$\alpha = (2,3,4,5,7)$.   
\end{example}

\subsection*{Statements of Results}
 
With a couple of new definitions, we can state our main theorem. 
Representing a cycle $D$ by its orientation string, the {\em primitive root $\sqrt{D}$} of $D$ is the shortest substring such that $\sqrt{D}^r = D$ for some integer $r$.  A cycle $D$ is {\em primitive} if $\sqrt{D} = D$.  Finding the primitive root of a cycle $D$ amounts to finding the minimum $i \geq 1$ such that $\sigma^i(D) = D$, so this can be done in time $O(n^{3/2})$ where $|D| = n$, (see \Cref{lem:Complexity}).

Recall that  $\Hom_w(C,D)$ is the subgraph of $\Hom(C,D)$ induced by maps of wind $w$.   We will only consider reflexive cycles $D$ of girth at least $4$, so \Cref{WindInvariance} applies to say that wind is preserved over components of $\Hom(C,D)$, so to describe them, it is enough to describe the components of the subgraphs $\Hom_w(C,D)$.

The main result that we prove in this paper is the following. 
\begin{theorem}\label{thm:wind_components}
Let $C$ and $D = \sqrt{D}^r = (y_1\dots y_s)^r$ be reflexive digraph cycles such that $D$ is non-contractible.  
Let $R$ be the maximum value such that $\sigma^i(\sqrt{D}^R) \leq^* C$ for some $i$.
Except in the exceptional case that $D$ is a symmetric cycle and $C$ is a directed cycle, the  subgraph $\Hom_w(C,D)$ of $\Hom(C,D)$, for $w \geq 0$, consists of 
\begin{enumerate}
   \item a single cyclic component containing a copy of $D$ if $w = 0$, 
  \item a single cyclic component if $0 <  w < R/r $, or if $ 0 < w = R/r$ and $\sigma^i(\sqrt{D})^{wr}y_{i+1} \leq^* C$ 
           for all shifts $\sigma^i$ of $\sqrt{D}$,   
  \item $c$ non-cyclic components if $0 < w = R/r$ and there are $c$ values of $i \in [s]$ for which  $\sigma^i (\sqrt{D})^{wr}y_{i+1} \nleq^* C$,
  \end{enumerate}  
 and nothing if $ R/r < w$. In the exceptional case, (2) and (3) are replaced with: a single cyclic component if $0 < w \leq^* R/r$. 
   \end{theorem}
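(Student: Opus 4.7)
My plan is to reduce the reconfiguration question to a combinatorial one about selection functions for $*$-substrings. The first step is to use \Cref{lem:pushup} to show that every component of $\Hom_w(C,D)$ contains a monotone map: from any $\phi \in \Hom_w(C,D)$, pushing up vertices greedily whenever permitted eventually produces an increasing monotone map (for $w > 0$) or a ``copy of $D$'' (for $w = 0$) via a mix of up- and down-moves. This reduction also immediately yields emptiness when $w > R/r$, because a monotone wind-$w$ map is exactly a witness to $\sigma^i(\sqrt{D}^{rw}) \leq^* C$ for some $i$, contradicting the maximality of $R$.

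Next I would parametrize monotone wind-$w$ maps by pairs $(i, \alpha)$, where $i = \phi(c_0) \bmod s$ and $\alpha$ is a selection function witnessing $\sigma^i(\sqrt{D}^{rw}) \leq^* C$. I would then show that up-edges between monotone maps come in two flavors: \emph{slides}, which fix $i$ and shift one index of $\alpha$ forward by one (corresponding to a single vertex of $C$ moving up), and \emph{rotations}, which change $i$ to $i+1 \pmod s$ by absorbing one extra letter $y_{i+1}$ at the front of the selection. The slides on a fixed $i$ connect any two selection functions for $\sigma^i(\sqrt{D}^{rw}) \leq^* C$, since the poset of such selection functions under pointwise comparison is a distributive lattice whose cover relations are exactly the slides.

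Then I would determine when a rotation from $i$ to $i+1$ is available. The rotation requires consuming a $y_{i+1}$ position in $C$ beyond those used by the selection function, which is exactly the condition $\sigma^i(\sqrt{D})^{wr} y_{i+1} \leq^* C$. If $w < R/r$, strict slack ensures every rotation is available for every $i$, and the cyclic group of base-points is fully traversable, giving one cyclic component. If $w = R/r$ and all rotations are available, the same conclusion holds. If $w = R/r$ but rotations fail at exactly $c$ values of $i$, the base-point cycle $\mathbb{Z}/s$ is cut at $c$ places, yielding $c$ non-cyclic components.

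The main obstacle I expect is showing rigorously that the slide-and-rotate moves exhaust the up-adjacencies among monotone maps, and that arbitrary reconfiguration paths (possibly through non-monotone intermediaries) can be factored into slides, rotations, and pushup/pushdown pairs, so that no ``hidden'' reconfiguration merges components that the combinatorial analysis separates. The exceptional case where $D$ is symmetric and $C$ is directed reduces to $\sqrt{D} = *$, $s = 1$, $r = n$, in which the rotation condition is vacuous and (2) and (3) collapse into a single cyclic component for all $0 < w \leq R/r$.
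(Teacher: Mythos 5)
Your architecture mirrors the paper's: push up to monotone maps, parametrize monotone wind-$w$ maps by base point and selection function, observe that within a fixed base point the selection functions form a poset connected by ``slides,'' and detect a ``rotation'' from base point $i$ to $i+1$ exactly when $\sigma^i(\sqrt{D})^{wr}y_{i+1} \leq^* C$. However, the obstacle you flag at the end is not a side issue to be cleaned up later: it is the central technical content of the proof, and your proposal does not supply it. You need to rule out the possibility that two base-point classes that the slide/rotate combinatorics keeps separate are nonetheless joined by some reconfiguration path through non-monotone maps. The paper closes this gap in two steps that your outline is missing. First, it imports the Mar\'oti--Z\'adori refinement machinery to show that every edge of $\Hom(C,D)$ refines to a path of non-refinable edges, that non-refinable edges are pure up- or down-edges (Lemma 2.5 from \cite{BLS21}), and that, outside the exceptional case, a non-refinable up-edge incident to a monotone map moves exactly one vertex (\Cref{lem:refine}). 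Second, and crucially, it does not work with the monotone maps alone but with the sets $\Mon_1^+(C,D;i)$ of \emph{all} maps whose monotone push-up has base point $i$; these partition $\Hom_1(C,D)$, and Proposition~\ref{MonReduction} shows an up-edge exits $\Mon_1^+(C,D;i)$ if and only if the rotation condition holds. Because any path between two parts of the partition must cross via a non-refinable up- or down-edge, and those are exactly the ones Proposition~\ref{MonReduction} controls, no ``hidden'' reconfiguration exists. Without these two ingredients the proof does not go through.

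A smaller issue: you set $i = \phi(c_0) \bmod s$, which identifies base points differing by a period of $\sqrt{D}$. The paper indexes base points by $i \in [n]$ (or $[wn]$ after replacing $D$ by $wD$) and only afterwards uses that the rotation condition is $s$-periodic in $i$ to count components by residues in $[s]$; conflating them at the outset obscures what is actually being partitioned. Also, your claim that the cover relations of the selection-function poset are ``shift one index forward by one'' is not what the paper uses (nor obviously true, since shifting by one can break the $*$-substring condition); the relevant moves shift one index past a run of unselected positions, and the paper only needs the existence of minimum and maximum elements reachable by such moves, not a distributive lattice structure.
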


 \begin{remark}\label{1isenough}
    As $D$ is non-contractible, a map in $\Hom_w(C,D)$, for $w \geq 1$, can be viewed as a map in $\Hom_1(C,wD)$. Doing this does not change $D$ or $R$ but replaces $r$ with $wr$, and statements (2) and (3) remain the same.   Thus it is enough to prove the theorem in the cases $w = 0$ and $w=1$.  
  \end{remark}

 Again, applying Theorem \ref{thm:wind_components} to the reverse  $C^{-1}$ of $C$ we also get a characterisation of the components of $\Hom(C,D)$ of negative wind, so this gives a comprehensive description of the components of $\Hom(C,D)$.  Fact \ref{fact:mainsym} now follows by taking $C=*^m$ and $D=*^n$, so $Y = *$ and $(r,s) = (n,1)$. 

In Section \ref{sec:tools} we recall results from \cite{BLS21}, \cite{LLP24}, and \cite{MZ12} that will allow us, among other things, to reduce the connectivity of $\Hom_1(C,D)$ to that of the subgraph induced on monotone homomorphisms.  Using these tools, we then prove Theorem \ref{thm:wind_components} in Section \ref{sec:proof}.
In Section \ref{sec:Algorithms} we give simple algorithms to determine the primitive root of $D$ and to determine which of the conditions hold in Theorem \ref{thm:wind_components}.  From this, we get, in \Cref{Complexity}, that the problem $\Recon(D)$ for reflexive digraph cycles can be solved for cycle instances in polynomial time and logarithmic space.  

\begin{remark}\label{rmk:looplessC}
  It was shown in \cite{BLS21}, that unless the target $D$ is a digraph $4$-cycle containing a $4$-cycle of algebraic length $0$, the presence of loops on an instance $C$ does not change the existence of a path.   
  From this we get that Theorem \ref{thm:wind_components} holds for general cycle instances $C$ except in the case that $D$ contains a $4$-cycle of algebraic girth $0$.    
  In the case that $D$ is a reflexive digraph of length $4$ containing a $4$-cycle of algebraic length $4$, the complexity of $\Recon(D)$ is still unknown. 
\end{remark}

 \section{Tools for reducing to monotone homomorphisms}\label{sec:tools}

 In this section we recall known results, and tailor from them several lemmas, Lemmas \ref{lem:pushup}, \ref{uptomono} and \ref{lem:refine},  that will allow us to prove  Theorem \ref{thm:wind_components}  in the next section. The first two will allow us to restrict our attention mostly to monotone homomorphisms, and the third will allow us to make assumptions about the edges of $\Hom(C,D)$ between monotone homomorphism. 
   In all these lemmas $C =  c_0c_1 \dots c_{m-1}c_0$ and $D = (0)(1) \dots (n-1)(0)$ are digraph cycles, $D$ is non-contractible, and $0 \leq w \leq m/n$.    

  
\begin{lemma}\label{lem:pushup}
 Let $\phi \in \Hom(C,D)$ and let $S$ be the vertices of a subpath of $C$ consisting of 
 edges that are stationary under $\phi$, so $\phi$ maps $S$ to a single vertex $d$ of $D$. 
 If the map $\phi'$ we get from $\phi$ by moving the elements of $S$ up to $d+1$ is a homomorphism, then it is adjacent to $\phi$ in $\Hom(C,D)$.  We call the edge in $\phi\phi'$ a {\em one-step up edge}.
\end{lemma}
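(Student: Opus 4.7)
The plan is to show that one of the arcs $\phi \to \phi'$ or $\phi' \to \phi$ exists in $\Hom(C,D)$, which is what it means for these two homomorphisms to be adjacent. Since $d$ and $d+1$ are consecutive vertices of the cycle $D$, the edge between them is either a forward arc $d \to d+1$, a backward arc $d+1 \to d$, or symmetric. In the forward case I would verify $\phi \to \phi'$; in the backward case, $\phi' \to \phi$; the symmetric case gives both directions.

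Focus on the forward case $d \to d+1$; the backward case is entirely symmetric after swapping the roles of $\phi$ and $\phi'$. To check that $\phi \to \phi'$, I would fix an arbitrary arc $u \to v$ of $C$ and verify $\phi(u) \to \phi'(v)$ is an arc of $D$ by splitting on which of $u, v$ lie in $S$. If $u, v \notin S$, then $\phi$ and $\phi'$ agree on both endpoints, so the required arc is $\phi(u) \to \phi(v)$, which holds because $\phi$ is a homomorphism. If only $v \in S$, then $\phi(u) = \phi'(u)$ and $\phi'(v) = d+1$, so $\phi(u) \to \phi'(v) = \phi'(u) \to \phi'(v)$ is an arc by the hypothesis that $\phi'$ is a homomorphism; this is the one place that hypothesis is needed. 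If only $u \in S$, then $\phi'(v) = \phi(v)$ and $\phi(u) \to \phi'(v) = \phi(u) \to \phi(v)$ is an arc by $\phi$ being a homomorphism. Finally, if both $u, v \in S$, the arc required is $d \to d+1$, which holds by the case assumption.

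The reflexive loops $a \to a$ of $C$ are handled the same way: the needed condition $\phi(a) \to \phi'(a)$ reduces either to a loop of $D$ when $a \notin S$, or again to $d \to d+1$ when $a \in S$. I do not expect a real obstacle: the hypothesis that $\phi'$ is itself a homomorphism is exactly what bridges the boundary edges between $S$ and its complement, while the interior edges of $S$ and the loops are controlled by the single local adjacency of $d$ and $d+1$ in the target cycle, whose orientation is what dictates which direction of the $\Hom$-arc we obtain.
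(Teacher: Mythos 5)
Your proof is correct and takes essentially the same approach as the paper's: a case analysis on whether the endpoints of an arc $u \to v$ of $C$ lie in $S$, using $\phi$ being a homomorphism for the cases where $v \notin S$ or only $u \in S$, and the hypothesis that $\phi'$ is a homomorphism for the boundary case where only $v \in S$, with the forward/backward cases for the arc $d\,(d{+}1)$ handled symmetrically. The paper combines your first and third cases into one (the case $v \notin S$, which works uniformly regardless of whether $u \in S$), but this is only a minor organizational difference.
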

\begin{proof}
With the setup of the lemma, assume, at first, that $d \to d+1$ in $D$, and let $c \to c'$ in $C$. 
 We show that  $\phi(c) \to \phi'(c')$.  
 If $c'$ is not in $S$, then $\phi(c) \to \phi(c') = \phi'(c')$ because $\phi$ is a homomorphism and $c'$ does not move. So we may assume that $c'$ is in $S$. 
 If $c$ is also in $S$, then $\phi(c) = d \to d+1 = \phi'(c')$. So we may assume that $c \not\in S$.  Then $\phi(c) = \phi'(c) \to \phi'(c')$.  

 If, on the other hand, $d \ot d+1$, then let $c \ot c'$ in $C$. The same argument, flipping arrows, shows that $\phi(c) \ot \phi'(c')$.  
 Either way, we get that $\phi$ is adjacent to $\phi'$ in $\Hom(C,D)$, as needed. 
 \end{proof}

 For a homomorphism $\phi \in \Hom(C,D)$, a subpath $P =c_a\dots c _b$ of $C$ is a {\em cutback} if its increase is $0$ (so $\phi(c_a) = \phi(c_b)$)
 and the increase of $c_a \dots c_i$ is negative  for  all $i \in \{a+1, \dots, b-1\}$.   In Figure \ref{fig:1}, the path $v_0v_1v_2$ is a cutback, 
 to make the shown map monotone, we will want to push $v_1$ up to where $v_0$ and $v_2$ are.  This is what the next lemma lets us do. 


\begin{lemma}\label{uptomono}
 From any homomorphism $\phi$ in $\Hom(C,D)$, and any cutback $P =c_a\dots c _b$ of $C$,  there is a path of up edges from $\phi$ to the homomorphism
 $\phi'$ we get from $\phi$ by setting $\phi'(c_i) = \phi(c_a)$ for all $c_i \in P$.  
\end{lemma}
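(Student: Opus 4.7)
My plan is to proceed by induction on the total depth $T(\phi) := \sum_{c_i \in P}(d - \phi(c_i))$, where $d := \phi(c_a) = \phi(c_b)$ and $\phi$ restricted to $P$ is viewed via its lift to $\mathbb{Z}$ along $P$. Under the cutback hypothesis every interior $c_i$ satisfies $\phi(c_i) < d$, so $T(\phi) \geq 0$ with equality exactly when $\phi$ already agrees with $\phi'$ on $P$; this handles the base case. The induction will only require the slightly weaker condition that $\phi(c_a) = \phi(c_b) = d$ and $\phi(c_i) \leq d$ for all $c_i \in P$, since after a pushup step some interior vertex could sit at height $d$ rather than strictly below it.

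For the inductive step, assume $T(\phi) > 0$ and set $m := \min\{\phi(c_i) : c_i \in P\}$, so $m < d$. I choose a maximal contiguous subpath $S = c_j \dots c_k$ of $P$ whose vertices all map to $m$. Because $c_a$ and $c_b$ map to $d > m$, we must have $a < j \leq k < b$, so $c_{j-1}$ and $c_{k+1}$ also lie in $P$; by the maximality of $S$ together with the fact that $\phi$-values of adjacent vertices of $C$ differ by at most $1$ (as $D$ is reflexive), both $c_{j-1}$ and $c_{k+1}$ map to $m+1$. I then apply Lemma \ref{lem:pushup} to push the vertices of $S$ up from $m$ to $m+1$: the resulting map $\phi''$ is a homomorphism because edges internal to $S$ become stationary at $m+1$, valid by the loop there, and each of the two boundary edges of $S$, previously decreasing from height $m+1$ to $m$, becomes stationary at $m+1$ as well, valid by the loop at $m+1$ regardless of the orientation of those edges in $C$. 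Thus $\phi\phi''$ is a one-step up edge, $T(\phi'') = T(\phi) - |S| < T(\phi)$, and $\phi''$ still satisfies the weakened hypothesis on $P$; by induction there is a path of up edges from $\phi''$ to $\phi'$, to which I prepend $\phi\phi''$ to finish.

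The point I expect to be the main obstacle is ensuring that $S$, viewed as a subpath of $C$ and not just of $P$, is bordered by vertices at height $m+1$. If $S$ were allowed to extend outside $P$, its $C$-boundary vertex could also sit at height $m$, in which case pushing $S$ alone would create a step from $m$ to $m+1$ across a formerly stationary edge, requiring an arc of $D$ of a specific orientation between $m$ and $m+1$ which need not exist. The cutback condition $\phi(c_a) = \phi(c_b) = d > m$ is precisely what rules this out, by forcing $S$ to lie strictly between $c_a$ and $c_b$ and hence to inherit its $C$-boundary from $P$; this is why the lemma needs the full cutback hypothesis rather than simply $\phi(c_a) = \phi(c_b)$.
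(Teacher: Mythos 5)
Your proof is correct, and at the high level it pursues the same strategy as the paper: push contiguous blocks of stationary vertices up via Lemma~\ref{lem:pushup}, organized into an induction. The organizations differ in detail. The paper inducts on the maximum depth below $\phi(c_a)$ reached inside $P$; its inductive step recurses on the interior $c_{a+1}\dots c_{b-1}$ (regarded as a cutback at height $\phi(c_a)-1$) to bring it all up to $\phi(c_a)-1$, then performs one final push of the whole interior up to $\phi(c_a)$. You instead induct on the total ``area'' $T(\phi)$ and at each step push up a single maximal contiguous block sitting at the minimum height $m$. Your version has a genuine advantage: the paper's recursive step asserts that $c_{a+1}\dots c_{b-1}$ is again a cutback, which is not literally true when some interior vertex of $P$ already returns to height $\phi(c_a)-1$; making that step rigorous requires either splitting the interior into its maximal sub-cutbacks or relaxing the cutback definition. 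You anticipate exactly this by strengthening the induction to the invariant $\phi(c_a)=\phi(c_b)=d$ and $\phi(c_i)\le d$ on $P$, which, unlike the strict cutback condition, is preserved under your push-up moves. Your closing observation, that the endpoints satisfying $\phi(c_a)=\phi(c_b)=d>m$ are what force the maximal block $S$ to sit strictly inside $P$ and hence be flanked on both sides by vertices at height $m+1$, is precisely the right thing to single out as the place the cutback hypothesis does its work.
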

\begin{proof}
 The proof is by induction on $m$, the minimum, over $i \in \{a, \dots, b\}$ of the increase of the subpath $c_a \dots c_i$ of $P$ under $\phi$. 
 If $m = 1$, then $\phi$ takes all of $c_{a+1} \dots c_{b-1}$ to $\phi(c_a) - 1$, and as $\phi(c_a)$ has a loop, we can apply \Cref{lem:pushup} to get a path 
 up to $\phi'$.  So assume that $m \geq 1$.  Then where $a'$ is the first index in $\{a, \dots, b\}$ for which $\phi(a') = \phi(a)-1$ and $b'$ is the last, we have that 
 $c_{a'} \dots c_{b'}$ is a cutback with smaller $m$. By induction, we can move all  vertices in this path up to $\phi(a')$, getting a cutback with $m = 1$, and then by the $m = 1$ case, we can get a path from this up to $\phi'$. 
\end{proof}

 Any non-monotone homomorphism has a cutback, and so we immediately get the following. 

\begin{corollary}\label{uptomono2}
 From any homomorphism $\phi$ in $\Hom(C,D)$,  there is a path of up edges to a monotone  homomorphism.
\end{corollary}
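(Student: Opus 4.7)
The plan is to apply Lemma \ref{uptomono} iteratively, flattening one cutback at a time, with termination controlled by a monovariant that counts non-stationary edges.

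First, I would justify the one-line remark preceding the corollary: every non-monotone $\phi \in \Hom(C,D)$ contains a cutback. Lift $\phi$ to a walk $f : \{0,1,\ldots,m\} \to \mathbb{Z}$ with $f(0) = 0$ whose consecutive increments lie in $\{-1,0,+1\}$, recording whether each edge is decreasing, stationary, or increasing. Non-monotonicity together with $w \geq 0$ guarantees at least one decreasing edge; let $i^\ast$ be the smallest index at which $f$ strictly decreases. Extending $f$ periodically via $f(m+k) := f(k) + wn$, one has $f(i^\ast + m) = f(i^\ast) + wn \geq f(i^\ast)$, so $f$ must return to the level $f(i^\ast)$ at some first index $b > i^\ast$. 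Because $f$ moves by at most $1$ per step and $b$ is the first return, $f(j) < f(i^\ast)$ for every $i^\ast < j < b$, so $c_{i^\ast} c_{i^\ast + 1} \cdots c_b$ (indices mod $m$) is a cutback.

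Given a cutback, Lemma \ref{uptomono} supplies an up-edge path from $\phi$ to the homomorphism $\phi_1$ obtained by flattening the interior of the cutback to the value $\phi(c_{i^\ast})$. Under $\phi_1$ every edge strictly inside the cutback is stationary, while edges outside and on the boundary are unchanged. Since a cutback contains at least one decreasing edge, this strictly decreases the total count of non-stationary edges. Iterating produces $\phi = \phi_0, \phi_1, \ldots, \phi_N$ connected by up-edge paths, with $N$ bounded by the initial non-stationary edge count. The terminal $\phi_N$ has no cutback, hence no decreasing edge, and so is either monotone or (only in the degenerate $w = 0$ case) a constant map; concatenating the sub-paths yields the desired up-edge path from $\phi$ to a monotone homomorphism.

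The main obstacle I foresee is the cutback existence argument, in particular the use of periodic extension to guarantee that the walk returns to the required level within one additional traversal of $C$; the rest of the argument is a straightforward monovariant induction resting on Lemma \ref{uptomono}.
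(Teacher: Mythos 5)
Your proposal is correct and takes the same route as the paper, whose entire proof is the one-line observation that any non-monotone homomorphism has a cutback followed by iterating \Cref{uptomono}. The walk-lifting argument for cutback existence and the non-stationary-edge monovariant for termination are precisely what the paper's ``immediately'' elides, and the degenerate wind-$0$ constant map you flag is indeed not literally monotone by the stated definition but is treated as such throughout (it constitutes $\Mon_0(C,D)$).
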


In fact, we can get such a path by \Cref{uptomono} by only pushing up cutbacks, and  there is a unique monotone map we get in this way; call it the {\em monotone push up of $\phi$}. 


   Our last main tool, a complement to Lemma \ref{lem:pushup}, will allow us to say that,  except in the exceptional case, the only edges of $\Hom(C,D)$ that we will have to consider are one-step up edges between monotone maps.    We use results from \cite{MZ12} and \cite{BLS21} for this.  For an edge $\phi\phi'$ of $\Hom(C,D)$, let $\Neq(\phi,\phi')$ be the set of vertices $c$ of $C$ for which 
   $\phi(c) \neq \phi'(c)$. For a subset $T \subset \Neq(\phi,\phi')$, let $\phi_T$ be the map that agrees with $\phi'$ on $T$ and with $\phi$ on $V(C) \setminus T$.  
   It was shown in \cite{MZ12} that if $\phi_T$ is a homomorphism, then $\phi\phi_T\phi'$ is a path in $\Hom(C,D)$. The edge $\phi\phi'$ is {\em non-refinable} if there is no 
  $T \subset \Neq(\phi,\phi')$ such that $\phi_T$ is a homomorphism.

  The following is Lemma 2.10 of \cite{MZ12}, the `moreover' part is not in the statement of the lemma, but is in the proof. A {\em strong component} in a digraph is a subgraph that is maximal with respect to the property that we can get from any vertex to any other by a forward directed path. It is {\em terminal} if it has no out arcs to vertices not in $T$.    

  \begin{lemma}[\cite{MZ12}]\label{MZ}
    For digraphs $G$ and $H$ and an edge $\phi\phi'$ of $\Hom(G,H)$ let $A$ be the digraph on the vertices of $G$ such that  $g \to_A g'$ if $\phi(g) \to \phi'(g')$.
    The edge $\phi\phi'$ is non-refinable if and only if $A$ is strongly connected; moreover, if $T$ is a terminal strong component of $A$, then $\phi\phi_T\phi'$ 
    is a path in $\Hom(G,H)$.        
  \end{lemma}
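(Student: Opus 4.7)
The plan is to verify both parts of the lemma by a uniform case analysis on the arcs of $G$. The key preliminary observation is that the hypothesis ``$\phi \phi'$ is an edge of $\Hom(G,H)$'' (taking $\phi \to \phi'$ as the underlying arc) means $\phi(u) \to \phi'(v)$ in $H$ for every arc $u \to v$ of $G$, so by the very definition of $A$ the arc set of $G$ is contained in the arc set of $A$.

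For the ``moreover'' clause, suppose $T$ is a terminal strong component of $A$. To show $\phi_T$ is a homomorphism, consider an arbitrary arc $u \to v$ of $G$ and split into four cases according to whether $u$ and $v$ lie in $T$. If $u,v \in T$ we use that $\phi'$ is a homomorphism; if $u,v \notin T$ we use that $\phi$ is; and if $u \notin T$, $v \in T$ then we need $\phi(u) \to \phi'(v)$, which is exactly $\phi \to \phi'$ applied to the arc $u \to v$. The remaining case $u \in T$, $v \notin T$ cannot occur: the arc $u \to v$ of $G$ is also an arc of $A$ by the preliminary observation, and terminality of $T$ forbids such an exiting $A$-arc. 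An entirely analogous four-case analysis on each arc $u \to v$ (substituting $\phi_T$ for one of the two maps) shows that $\phi \to \phi_T$ and $\phi_T \to \phi'$, yielding the path $\phi\phi_T\phi'$ in $\Hom(G,H)$.

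For the biconditional, the direction ``$A$ not strongly connected implies $\phi\phi'$ refinable'' is then immediate: a nontrivial strong component decomposition of $A$ produces a terminal strong component $T \subsetneq V(G)$; by the moreover clause $\phi_T$ is a homomorphism and $\phi\phi_T\phi'$ is a path, and after replacing $T$ by $T\cap\Neq(\phi,\phi')$ (which leaves $\phi_T$ unchanged since $\phi=\phi'$ off $\Neq$) we obtain a witness for refinability.

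The converse direction---strong connectedness of $A$ implies non-refinability---is the main obstacle. The plan is to assume some $\emptyset \subsetneq T \subsetneq \Neq(\phi,\phi')$ with $\phi_T$ a homomorphism and extract from it a proper nonempty subset of $V(G)$ that is closed under $A$-arcs, contradicting strong connectedness. The idea is that the hom condition for $\phi_T$ on each arc of $G$ translates, via the identity $A$-arcs $\supseteq$ $G$-arcs, into constraints that prevent $A$-arcs from escaping from $T$ (or force $T$ to absorb their heads). Carefully matching the hom-compatibility of $T$ to the exact definition of $A$---in particular, handling the asymmetry between the $u \in T$, $v \notin T$ and $u \notin T$, $v \in T$ cases---is the delicate step, and it is where the structural features of the specific $A$ defined here (rather than a more symmetric variant) are used.
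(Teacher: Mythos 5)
The paper does not prove \cref{MZ}; it is cited directly from \cite{MZ12}, with a remark that the ``moreover'' part appears in the proof of their Lemma~2.10 rather than its statement. So there is no in-paper argument to compare against, only the cited source.

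Your proof of the ``moreover'' clause is correct. The four-case analysis on an arc $u\to v$ of $G$, combined with the observation that every arc of $G$ is an arc of $A$ (because $\phi(u)\to\phi'(v)$ is precisely what $\phi\to\phi'$ guarantees on $G$-arcs), rules out the case $u\in T,\ v\notin T$ via terminality of $T$, and the same case split gives $\phi\to\phi_T$ and $\phi_T\to\phi'$. That part stands.

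The biconditional, however, is left in an unfinished state. In the direction ``$A$ not strongly connected $\Rightarrow$ refinable'' you replace $T$ by $T\cap\Neq(\phi,\phi')$, but you never verify that this intersection is a \emph{proper nonempty} subset of $\Neq(\phi,\phi')$; if it is empty or equals $\Neq(\phi,\phi')$ then $\phi_T$ is just $\phi$ or $\phi'$ and witnesses nothing. In fact, with $A$ taken literally on all of $V(G)$ with an arc $g\to_A g'$ whenever $\phi(g)\to\phi'(g')$ (not only along arcs of $G$), this direction is simply false: one can build short path instances in which $\Neq(\phi,\phi')$ is a singleton (so the edge is vacuously non-refinable) while a vertex outside $\Neq(\phi,\phi')$ receives no $A$-arcs and makes $A$ fail strong connectivity. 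For the converse direction you only give a plan, and that plan does not go through as described: ``$\phi_T$ is a homomorphism'' constrains only arcs of $G$, whereas $A$-arcs $g\to_A g'$ are declared between arbitrary pairs with $\phi(g)\to\phi'(g')$, so the hom condition on $T$ gives no control over these extra $A$-arcs and cannot force $T$ to be $A$-closed. The auxiliary digraph one actually needs is supported on $\Neq(\phi,\phi')$, with arcs only along $G$-arcs and with the \emph{negated} condition---roughly $u\to_A v$ when $u\to v$ in $G$ and $\phi'(u)\not\to\phi(v)$---chosen so that ``$\phi_T$ is a homomorphism'' is literally ``no $A$-arc leaves $T$''; with that reading both directions of the equivalence become short, and your moreover argument still applies. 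As written, your proposal establishes the moreover clause but neither direction of the iff.
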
    
  
   The same construction was used in \cite{BLS21} in the case that $D$ was a cycle,  only the word `indecomposable' was used instead of `non-refinable'\footnote{Though referencing \cite{MZ12} the authors of \cite{BLS21} apparently did not read it all. Sorry!} 
   The following is Lemma 5.2 of \cite{BLS21}.
 
  \begin{lemma}[\cite{BLS21}]
     Any non-refinable edge of $\Hom(G,D)$ is either an up edge or a down edge. 
  \end{lemma}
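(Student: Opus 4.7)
The plan is to prove the contrapositive: if an edge $\phi\phi'$ of $\Hom(G,D)$ is neither an up edge nor a down edge, then it is refinable. Since $G$ is reflexive, for every $g\in V(G)$ the arc $\phi(g)\to\phi'(g)$ must lie in $D$, and since $D$ is a reflexive non-contractible cycle this forces $\phi'(g)\in\{\phi(g)-1,\phi(g),\phi(g)+1\}$. Hence $\Neq(\phi,\phi')$ partitions into the set $U$ of vertices with $\phi'(g)=\phi(g)+1$ and the set $U^{-}$ of vertices with $\phi'(g)=\phi(g)-1$. The hypothesis that $\phi\phi'$ is neither an up nor a down edge is precisely that both $U$ and $U^{-}$ are non-empty.

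I would then exhibit a refinement directly by taking $T=U$. Since both $U$ and $U^{-}$ are non-empty, the map $\phi_{U}$ is distinct from both $\phi$ and $\phi'$, so it remains to verify that $\phi_{U}$ is a homomorphism and that both $\phi\phi_{U}$ and $\phi_{U}\phi'$ are edges of $\Hom(G,D)$. These last two checks are straightforward casework on whether the endpoints of an arc $g\to g'$ of $G$ lie in $U$: each required adjacency in $D$ reduces to one of the three arcs $\phi(g)\to\phi(g')$, $\phi'(g)\to\phi'(g')$, or $\phi(g)\to\phi'(g')$ guaranteed respectively by $\phi$ and $\phi'$ being homomorphisms and $\phi\phi'$ being an edge of $\Hom(G,D)$.

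The step I expect to be the main obstacle is verifying that $\phi_{U}$ itself is a homomorphism. For an arc $g\to g'$ of $G$, the three cases in which at most one of $g,g'$ lies in $U$ reduce immediately to the three arcs above. The remaining case is $g\in U$ and $g'\in U^{-}$, where I need $\phi(g)+1\to\phi(g')$ in $D$ knowing only the arcs $\phi(g)\to\phi(g')$ and $\phi(g)+1\to\phi(g')-1$. My plan is to exploit that in a reflexive cycle of girth at least $4$ the out-neighbourhood of any vertex is contained in three cyclically consecutive vertices: the first arc forces $\phi(g')-\phi(g)\in\{-1,0,1\}$ while the second forces $\phi(g')-\phi(g)-2\in\{-1,0,1\}$, whose intersection is $\phi(g')=\phi(g)+1$. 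The required arc is then the loop at $\phi(g')$, available since $D$ is reflexive. The only remaining possibility for a non-contractible $D$ is the directed $3$-cycle, which is vacuous: since that $D$ has no backward arcs, $U^{-}=\emptyset$ for every edge, so every edge is already an up edge.
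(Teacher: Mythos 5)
Your approach is a direct verification that $T=U$ refines $\phi\phi'$, which is more elementary than the route the paper takes (it cites Lemma 5.2 of \cite{BLS21}, which in turn works through the auxiliary digraph $A$ of \cite{MZ12}, as in \Cref{MZ} and \Cref{lem:refine}). The overall plan is sound, and the girth observation is the right engine; but as written there are two real gaps.

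First, the ``third arc'' $\phi(g)\to\phi'(g')$ is not guaranteed merely by ``$\phi\phi'$ being an edge of $\Hom(G,D)$'': an edge means $\phi\to\phi'$ \emph{or} $\phi'\to\phi$, and if only $\phi'\to\phi$ holds then the available arc is $\phi'(g)\to\phi(g')$, not the one you invoke. This affects both your verification that $\phi\phi_U$ and $\phi_U\phi'$ are edges and the case $g\notin U,\ g'\in U$ of the homomorphism check. You can repair this with a stated WLOG: if $\phi'\to\phi$, swap the names of $\phi,\phi'$ and observe that the map $\phi_U$ is unchanged (the new $U$ is the old $U^-$, and the map agreeing with the old $\phi$ on the old $U^-$ and with the old $\phi'$ elsewhere is exactly $\phi_U$, since $\phi=\phi'$ off $U\cup U^-$). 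Relatedly, your case split only treats $g\in U,\ g'\in U^-$; the mirror case $g\in U^-,\ g'\in U$ needs the same style of argument (or disappears under the WLOG together with a symmetric computation).

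Second, and more substantively, the ``intersection'' computation in the hard case is incomplete when $D$ has length exactly $4$, which the hypotheses permit. Working modulo $4$, the constraint from $\phi(g)\to\phi(g')$ gives $\phi(g')-\phi(g)\in\{-1,0,1\}=\{3,0,1\}$, and the constraint from $\phi(g)+1\to\phi(g')-1$ gives $\phi(g')-\phi(g)\in\{1,2,3\}$; their intersection is $\{1,3\}$, not the singleton $\{1\}$ you claim. The spurious value $\phi(g')=\phi(g)-1$ must be eliminated, and this requires using the constraint you have so far left unused in this case, namely that $\phi\phi'$ is itself an arc of $\Hom(G,D)$: one checks that $\phi(g')=\phi(g)-1$ (with $g\in U$, $g'\in U^-$) forces $\phi(g)\to\phi'(g')=\phi(g)+2$ or $\phi'(g)=\phi(g)+1\to\phi(g')=\phi(g)+3$, both of which violate girth $\ge 4$. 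Once you add that step and the WLOG on direction, the argument is complete.
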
 

  This allows us to consider only the non-refinable up and down edges in $\Hom(C,D)$.    
  The following describes the non-refinable edges of monotone maps. 

\begin{lemma}\label{lem:refine}
   Let $\phi$ be a monotone map, and $\phi\phi'$ be a non-refinable up edge of $\Hom(C,D)$ for reflexive digraph cycles $C$ and $D$ where $D$ has length $n$. 
   Let $T$ be the set of vertices that it moves up.
    \begin{enumerate}
     \item If $C$ has length $m =  n$ then $C$ is a directed cycle, $D$ is a symmetric cycle, and  $T = V(C)$. If $\phi \to \phi'$ then $C$ is backwards directed, and if 
             $\phi \ot \phi'$ then $C$ is forwards directed. 
     \item If $C$ has length $m >  n$ then $T$ is a single vertex.  
       \end{enumerate}  
    \end{lemma}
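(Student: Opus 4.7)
The plan is to use \Cref{MZ} as the main tool: the edge $\phi\phi'$ is non-refinable iff the auxiliary digraph $A$ on $V(C)$, defined by $c\to_A c'$ iff $\phi(c)\to\phi'(c')$, is strongly connected. Without loss of generality I would take $\phi$ monotone increasing, so the level sets $L_d=\phi^{-1}(d)$ partition $V(C)$ into maximal subpaths and each edge of $C$ is either stationary (inside an $L_d$) or increasing (between $L_d$ and $L_{d+1}$). Throughout I will exploit the arithmetic fact that the cycle $D$ of length $n\geq 3$ has no arc $d\to d+2$; this remains true for the directed $3$-cycle, since $d\to d-1$ is also not present there.

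My first step is to extract structural constraints on $T$ from $\phi\to\phi'$ being an arc of $\Hom$ and from $\phi'$ being a homomorphism, unpacked arc-by-arc. For any increasing edge $c_ic_{i+1}$ of $C$ carrying a forward arc $c_i\to c_{i+1}$, the ``top'' vertex $c_{i+1}$ must lie outside $T$, since otherwise $\phi(c_i)\to\phi'(c_{i+1})$ would demand the missing arc $d\to d+2$; dually, a transition from $T$ back out of $T$ on a stationary edge inside some $L_d$ requires a backward arc in $D$ that is generically absent. Combined, these observations force $T$ to be a disjoint union of ``tails'' (suffixes) of level subpaths, and in particular the first vertex of each level subpath is never in $T$.

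For part~(2) with $m>n$ some level subpath contains at least one stationary edge. I would assume for contradiction that $|T|\geq 2$ and construct a proper refinement $T'\subsetneq T$ with $\phi_{T'}\in\Hom(C,D)$ and $\phi\to\phi_{T'}$ an arc of $\Hom$, contradicting \Cref{MZ}. If $T$ meets two distinct level subpaths, I would let $T'$ be its intersection with one of them; if $T$ lies in a single level subpath as a tail of length $\geq 2$, I would let $T'$ be a proper non-empty sub-tail. A case-by-case check on each edge of $C$ (each arc direction, stationary or increasing, each endpoint membership in $T'$) would verify both the arc and homomorphism conditions for $\phi_{T'}$, yielding the refinement and the desired contradiction; hence $|T|=1$.

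For part~(1) with $m=n$, monotonicity and the bound wind $\leq m/n=1$ force wind exactly $1$ with no stationary edges, so $\phi$ is a bijection and every edge of $C$ is increasing. The forward-arc constraint then forbids $T$ from containing the head of any forward edge of $C$; combined with the dual constraint on backward arcs, if $C$ is mixed this forces $T=\emptyset$, ruling out $\phi\phi'$ as a nontrivial edge. So $C$ must be directed, and applying the analogous constraint for the opposite arc direction of $\phi\phi'$ pins the direction of $C$ opposite to the arc direction and forces $T=V(C)$. Finally, $\phi_{V(C)}=\phi+1$ being a homomorphism together with $\phi$ being one forces each edge of $D$ to carry both a forward and a backward arc, so $D$ is symmetric. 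The main obstacle will be the detailed edge-by-edge verification in the refinement step of part~(2), where many combinations of arc direction in $C$ (forward, backward, symmetric) and endpoint membership in $T'$ each demand separate attention; particular care is needed when $D$ has only directed arcs, as the validity of a candidate refinement depends sensitively on which arcs of $D$ are present.
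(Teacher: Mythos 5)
You correctly identify Lemma~\ref{MZ} as the main tool, but you use only its first clause (non-refinable iff no valid $T'$), whereas the paper's proof leans on the \emph{moreover} clause: $T$ is a terminal strong component of the auxiliary digraph $A$, and after showing that $A$ has no symmetric edges on the edges of $C$, it follows immediately that $T$ is either a single vertex or a backwards directed cycle spanning all of $V(C)$; a stationary edge then kills the latter possibility when $m>n$. Your route instead builds a proper refinement $T'$ directly, and this hinges on a structural claim about $T$ — that it is a disjoint union of tails (suffixes) of level subpaths, never containing a level subpath's first vertex — which you justify only for forward arcs and yourself describe as holding only ``generically.'' It is in fact false. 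Take $D$ the reflexive all-forward $4$-cycle, and let some level subpath $L_d$ of $C$ contain $c_a \to c_{a+1} \ot c_{a+2}$; then $T=\{c_{a+1}\}$ gives a valid non-refinable up edge (the homomorphism and arc-of-$\Hom$ conditions at both neighbours only need $d\to d+1$), yet $T$ is not a tail of $L_d$. Moreover, even when $T$ really is a tail of $L_d$ of length $\geq 2$, the proposed sub-tail $T'$ may fail: the new boundary edge $c_{k-1}c_k$ is an interior stationary edge whose orientation in $C$ need not match that of the original boundary edge $c_{j-1}c_j$ or of the increasing edge leaving $L_d$, so when $D$ carries only one arc at $(d,d+1)$ the verification that $\phi_{T'}$ is a homomorphism breaks down. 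The paper's strong-component argument bypasses all of this.

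There is a second gap in part (1). Your step ``$\phi_{V(C)}=\phi+1$ being a homomorphism together with $\phi$ being one forces each edge of $D$ to carry both a forward and a backward arc'' is not correct as stated: if $C$ is backwards directed, both $\phi$ and $\phi+1$ being homomorphisms only require backward arcs in $D$. The forward arcs that make $D$ symmetric come from the arc-of-$\Hom$ condition on the \emph{loops} of $C$: the loop $c_i\to c_i$ together with $\phi\to\phi'$ forces $\phi(c_i)\to\phi'(c_i)=\phi(c_i)+1$. This is exactly the point isolated in the first claim of the paper's proof (``as $c_{i+1}\to c_{i+1}$ we get $\phi(c_{i+1})\to\phi(c_{i+1})+1$''), and it is essential; without the reflexivity of $C$ the conclusion that $D$ is symmetric does not follow.
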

  \begin{proof}
       First, we assume that $\phi \to \phi'$ is a non-refinable up edge of $\Hom(C,D)$.   By  Lemma \ref{MZ}, $T$ is a terminal
       strong component of the auxiliary digraph $A$, and $\phi_T = \phi'$.  We start with two observations, the first is about adjacent homomorphisms, 
       the second is about $A$.  

       \begin{claim}
         If $T$ contain both endpoints of an increasing edge $c_ic_{i+1}$ of $C$, then $c_ic_{i+1}$ is a backwards (non-symmetric) edge of $C$, and 
         $\phi(c_{i+1})\phi(c_{i+1})+1$ is a symmetric edge of $D$.  
       \end{claim}
       \begin{proof}\claimproof
         If $c_i \to c_{i+1}$ then $\phi(c_i) \to \phi'(c_{i+1}) = \phi(c_i) + 2$, which is impossible as $D$ contains no transitive triangle, so $c_i \ot c_{i+1}$.
        Now as $c_i \ot c_{i+1}$ we get $\phi(c_{i+1}) = \phi'(c_i) \ot \phi'(c_{i+1}) = \phi(c_{i+1}) + 1$ because $\phi'$ is a homomorphism, and as
        $c_{i+1} \to c_{i+1}$ we get $\phi(c_{i+1}) \to \phi'(c_{i+1}) = \phi(c_{i+1}) + 1$. 
       \end{proof}

       \begin{claim}
         $A$ has no symmetric edges, so $T$ is either a backwards directed cycle or a single vertex. 
       \end{claim}
       \begin{proof} \claimproof
         An increasing edge $c_ic_{i+1}$ of $C$ clearly becomes a backwards edge in $A$-- as $D$ contains no transitive triangles, we must have $c_i \ot_A c_{i+1}$ and as 
         $D$ is reflexive we have $c_{i+i} \not\to_A c_{i+1}$.

        On the other hand, for a stationary edge $c_ic_{i+1}$ of $C$ mapped to a vertex $i$ of $D$ we have  $c_i \not\sim c_{i+1}$ in $A$ if  $i(i+1)$ is symmetric in $D$, and otherwise $c_i \to_A c_{i+1}$ or $c_i \ot_A c_{i+1}$, but not both. 

         As $T$ is a terminal strong component in an oriented cycle, it is either a directed cycle, or a single vertex. If it is a directed cycle, then it is a backwards cycle, 
         $C$ has increasing edge, and so $A$ has backwards edges. 
        \end{proof}

         If $C$ has length $m = n$, then all edges of $C$ are increasing edges, and all edges of $A$ are backwards, and so $T = V(A) = V(C)$.
         By the first claim, we get  that all edges of $C$ are backwards edges, and all edges of $D$ are symmetric. This gives statement (1).  
         
         So we may assume that $C$ has length $m >  n$.  By the second claim, we are done if we can show that $A$ is not a directed cycle, but for this 
         it is enough to show that there is some edge $c_ic_{i+1}$ for which $c_i \not\ot_A c_{i+1}$.  Assume, towards contradiction, that $A$ is a directed cycle. 
         Then by the first claim we again have that $D$ is symmetric. Now, however, there is a stationary edge, so there is some vertex $c_i$  such that $c_ic_{i+1}$ is increasing, but 
         $c_{i-1}c_i$ is stationary.  Then $c_i$ has no out edges in $A$. As $D$ is reflexive $\phi(c_{i+1}) \to \phi(c_i)+1$ so $c_i \not\to_A c_{i+1}$, and as $\phi(c_i)\phi(c_i)+1$ is symmetric  $\phi(c_{i-1}) \to \phi(c_i) + 1$, so $c_i \not\to_A c_{i-1}$.
  \end{proof}

 \section{Proof of the main theorem via monotone homomorphisms}\label{sec:proof}

   Let $\Mon(C,D)$ be the subgraph of  $\Hom(C,D)$ induced on the set of monotone maps, and for wind $w$ let $\Mon_w(C,D)$ be the subgraph of $\Hom_w(C,D)$ induced  by monotone maps.  By \Cref{uptomono}, every map in $\Hom(C,D)$ is in the same component as a map in $\Mon(C,D)$, so we start by understanding the components of $\Mon(C,D)$.  

 The case $w = 0$ is easy. 
  
  \begin{fact} 
    The graph $\Mon_0(C,D)$ is isomorphic to $D$. 
   \end{fact}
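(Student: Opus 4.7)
The plan is to identify the vertices of $\Mon_0(C,D)$ explicitly as the constant maps $\phi_d$, one for each $d \in V(D)$, and then verify that the induced adjacency in $\Hom(C,D)$ reproduces exactly the digraph $D$. First I would argue that every wind-$0$ monotone $\phi : C \to D$ must be constant: a monotone map has either no decreasing edges or no increasing edges, so its increase is (up to sign) just the count of non-stationary edges, and dividing by $|D|$ to get wind $0$ forces that count to be $0$. Hence every edge of $C$ is stationary under $\phi$, and since $C$ is connected this means $\phi$ sends all of $V(C)$ to a single vertex $d \in V(D)$.

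Conversely, for each $d \in V(D)$, the constant map $\phi_d$ is a genuine homomorphism: since $C$ and $D$ are reflexive, every arc $c \to c'$ of $C$ maps to the loop $d \to d$. Each $\phi_d$ has every edge stationary, hence wind $0$, and is vacuously monotone. This gives a canonical bijection $d \mapsto \phi_d$ between $V(D)$ and $V(\Mon_0(C,D))$.

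It remains to check adjacency. By definition $\phi_d \to \phi_{d'}$ in $\Hom(C,D)$ iff for every arc $c \to c'$ of $C$ we have $d = \phi_d(c) \to \phi_{d'}(c') = d'$ in $D$; since $C$ contains at least one arc, this condition is simply $d \to d'$. Thus $d \mapsto \phi_d$ is a digraph isomorphism $D \to \Mon_0(C,D)$.

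There is no real obstacle here; the only point needing care is the convention that a map in which every edge is stationary counts as monotone for the purposes of $\Mon_0$, which is the natural reading in this wind-$0$ boundary case. Once this is settled, the argument is immediate from the definitions of wind and of monotone, together with reflexivity of $C$ and $D$.
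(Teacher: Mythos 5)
Your proof is correct and follows essentially the same route as the paper: identify the vertices of $\Mon_0(C,D)$ as the constant maps $\phi_d$ and then check adjacency. The paper simply asserts that the vertices are "clearly" the constant maps and only verifies one direction of the adjacency (if $i \to i+1$ in $D$ then $\phi_i \to \phi_{i+1}$), whereas you give the short argument that wind $0$ plus monotonicity forces every edge stationary, and you state the adjacency check as a biconditional. You also flag, correctly, that the paper's verbatim definition of ``monotone'' (increasing requires ``not all stationary,'' decreasing requires ``every edge decreasing'') does not literally admit the all-stationary constant map; the Fact only makes sense under the natural reading that treats the constant map as monotone, and the paper silently adopts that convention. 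One tiny quibble: only reflexivity of $D$, not of $C$, is needed for the constant maps to be homomorphisms. These are refinements rather than a different approach.
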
 
   \begin{proof}
          The vertices of $\Mon_0(C,D)$ are clearly the maps $\phi_i$ for all $i \in V(D)$, that map all vertices of $C$ to the vertex $i$. 
          Assume that $i \to i+1$ in $D$, we show that $\phi_i \to \phi_{i+1}$.   Indeed, for any arc $c \to c'$ of $C$ we have 
          $\phi_i(c) = c \to c' = \phi_{i+1}(c')$, so $\phi_i \to \phi_{i+1}$.  Similarly, if $i \ot i+1$, then $\phi_i(c') \ot \phi_{i+1}(c)$, so $\phi_i \ot \phi_{i+1}$ 
   \end{proof}

  As observed in \Cref{1isenough}, for the case of $w \geq 1$, it is enough to consider the case of $w = 1$.  
  Recall that denoting (pointed) cycles $C$ and $D$ by their orientation strings
           \[ C = x_1x_2 \dots x_m \qquad D = y_1y_2 \dots y_n\]
  homomorphisms $\phi \in \Mon_1(C,D)$ are represented by their selection functions $\alpha_\phi: [n] \to [m]$.  For each $i \in [n]$,  let $\Mon_1(C,D;i)$ be the subgraph of $\Mon_1(C,D)$ induced by vertices $\phi$ such that $\phi(c_0) = i$. So $\Mon_1(C,D;i)$ consists of the different copies of $\sigma^i(D)$ as $*$-substrings of $C$.

\begin{example}
 Referring to \Cref{ex1}, and so \Cref{fig:ex1}, the first three maps are in $\Mon_1(C,D;0)$, while the fourth is in $\Mon_1(C,D;1)$.
 Notice how we get from $\phi_1$ to $\phi_2$ to $\phi_3$ to $\phi_4$ by moving letters of $D$ as a $*$-substring down.  
 The following discussion explains how this gives a path $\phi_1\phi_2\phi_3\phi_4$ of one-step up edges in $\Mon_1(C,D)$.
 \end{example}

 For $i \in [n]$, we order $\Mon_1(C,D;i)$ by setting $\phi \geq \phi'$ if the selection functions satisfy $\alpha_\phi(i) \leq \alpha_{\phi'}(i)$ for all $i$. The reversal of the
 order is intentional and will be explained presently. It is clear that $\Mon_1(C,D;i)$ has minimum and maximum elements with respect to this ordering; we call them $\Phi_i^m$ and $\Phi_i^M$ respectively.

\begin{example}
 Referring to \Cref{ex1}, we have $\Phi_0^m = \phi_1 <  \phi_2 < \phi_3 = \Phi_1^M$.
 \end{example}

  A {\em monotone one-step up edge} is a one-step up edge $\phi\phi'$ in $\Mon_1(C,D)$. It moves the vertices $S$ of some path $c_a \dots c_{b-1}$ of stationary edges 
  from some $d-1$  to $d$. As it is monotone, we know that $\phi(c_b) = d$.   If $c_0$ is not in this path, then $\phi$ and $\phi'$ are both in 
   $\Mon_1(C,D;i)$ for some $i$, and the selection functions  $\alpha_\phi$ and $\alpha_{\phi'}$ are the same except that $\alpha_\phi(d-i) = b$ and $\alpha_{\phi'}(d-i) = a$.
   That is, a monotone one-step up edge from $\phi$ corresponds to moving one value of the selection function $\alpha_\phi$ down  past values not in the image of $\alpha_\phi$.   For selection functions $\alpha, \alpha': [m] \to [n]$ with $\alpha(i) \leq \alpha'(i)$ for all $i$, we can clearly move $\alpha'$ down to $\alpha$ in this way, one index at a time, starting with the lowest $i$ on which they differ.   Thus for $\phi$ and $\phi'$ in $\Mon_1(C,D;i)$,  we have that $\phi' \leq \phi$, if and only if there is a sequence of monotone one-step up edges in $\Mon_1(C,D;i)$ from $\phi'$ to $\phi$. This is why we reversed the ordering.

   If $c_0$ is in the path $c_a \dots c_{b-1}$, then the edge is from $\Mon_1(C,D;i)$, for some $i$, to $\Mon_1(C,D;i+1)$.  In this case, $\alpha_\phi(d-i)$ moves from $b$ down 
   past $0$ modulo $m$ to some $a$ that was above all other edges selected by $\alpha_\phi$.  In particular, the selection function $\alpha_\Phi$ for $\Phi_i^M$ 
   finds the left-most copy of $\sigma^i(D)$ as a $*$-substring of $C$, and so has a one-step up edge to $\Mon_1(C,D;i+1)$ if and only if $\sigma^i(D)y_i$ is a 
   $*$-substring of $C$.

 \begin{example} 
 Referring again to \Cref{ex1}, there is a path $\phi_1\phi_2\phi_3\phi_4$ of monotone one-step up edges in $\Mon_1(C,D)$. 
  In fact, in each of these edges, we move a `$-$' down , which means vertices are moved up past a backwards edge, so 
  this path is actually  $\phi_1 \ot \phi_2 \ot \phi_3 \ot \phi_4$. 
  \end{example}

   Summarizing this discussion about $\Mon_1(C,D;i)$, we have the following facts.

 \begin{fact}
     For each $i \in [n]$, the graph $\Mon_1(C,D;i)$ is connected.  There are elements $\Phi_i^m$ and $\Phi_i^M$ such that for every element $\phi$ there is a 
    path of monotone one-step up edges from $\Phi_i^m$ to $\phi$ to $\Phi_i^M$.  
   \end{fact}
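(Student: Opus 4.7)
The plan is to exhibit $\Phi_i^m$ and $\Phi_i^M$ as the monotone maps whose selection functions are, respectively, the pointwise maximum and pointwise minimum over the set $\mathcal{S}_i$ of all selection functions witnessing $\sigma^i(D) \leq^* C$, and then quote the structural description of monotone one-step up edges worked out in the paragraph immediately preceding the Fact.

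The first step is to verify that $\mathcal{S}_i$ is closed under pointwise minimum and pointwise maximum. Given $\alpha,\beta \in \mathcal{S}_i$, set $\gamma(j) = \min(\alpha(j),\beta(j))$. Strict monotonicity follows from
\[ \gamma(j+1) \geq \min(\alpha(j)+1,\beta(j)+1) = \gamma(j)+1, \]
and the matching condition holds because $\gamma(j)$ always equals one of $\alpha(j),\beta(j)$, so $x_{\gamma(j)}$ already matches the corresponding letter of $\sigma^i(D)$ (or that letter is $*$). The identical argument handles the pointwise maximum. If $\mathcal{S}_i$ is empty there is nothing to prove; otherwise, since $\mathcal{S}_i$ is finite, repeatedly taking these operations yields a unique pointwise-largest selection function and a unique pointwise-smallest one. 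With the order-reversing convention in force (larger selection function corresponds to smaller map), the associated homomorphisms are the minimum $\Phi_i^m$ and the maximum $\Phi_i^M$ of $\Mon_1(C,D;i)$.

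Having produced these endpoints, every $\phi \in \Mon_1(C,D;i)$ satisfies $\Phi_i^m \leq \phi \leq \Phi_i^M$ directly from the defining inequalities of the pointwise extrema. The discussion preceding the Fact already established that whenever $\psi \leq \psi'$ in $\Mon_1(C,D;i)$, the inequality can be realized by a concrete sequence of monotone one-step up edges: one successively lowers a single value of the selection function through unselected positions, starting from the lowest index on which the two selection functions differ. Applying this to the chains $\Phi_i^m \leq \phi$ and $\phi \leq \Phi_i^M$ yields the required path of monotone one-step up edges from $\Phi_i^m$ through $\phi$ to $\Phi_i^M$, and in particular connectedness of $\Mon_1(C,D;i)$. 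I do not anticipate any real obstacle; the whole argument is a short closure check combined with a direct appeal to the structural description already at hand.
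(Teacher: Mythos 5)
Your proof is correct and follows essentially the same route as the paper: order $\Mon_1(C,D;i)$ via selection functions, identify global extrema $\Phi_i^m$ and $\Phi_i^M$, and realize each chain of inequalities by a sequence of monotone one-step up edges exactly as in the discussion preceding the Fact. Your lattice-style closure check (that the pointwise minimum and maximum of two selection functions witnessing $\sigma^i(D) \leq^* C$ are again such selection functions) is a welcome elaboration of the existence of extrema, which the paper asserts as ``clear'' without argument.
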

   The discussion about edges between $\Mon_1(C,D;i)$ and $\Mon_1(C,D;i+1)$ is as follows.

  \begin{fact}\label{MonRedEasyWay}  
       There is a monotone one-step up edge from $\Phi_i^M$ in $\Mon_1(C,D;i)$ to some map in $\Mon_1(C,D;i+1)$ if and only $\sigma^i(D)y_i \leq^* C$.  
  \end{fact}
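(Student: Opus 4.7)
The plan is to formalize the argument that is already sketched informally just before the statement of the fact. The proof splits into two steps: characterizing the monotone one-step up edges from $\Phi_i^M$ into $\Mon_1(C, D; i+1)$ in terms of selection functions, and then translating that characterization into the stated $*$-substring condition.

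For the characterization, any such edge $\Phi_i^M \phi'$ must send $c_0$ from value $i$ up to $i+1$, so the moved set $S$ contains $c_0$ and lies inside $(\Phi_i^M)^{-1}(i)$. Monotonicity of $\phi'$, combined with the girth-$\geq 4$ (or directed $3$-cycle) property forced by non-contractibility of $D$, constrains the two endpoints of the arc $S$ in $C$: the border edges of $S$ become non-stationary in $\phi'$, and for the resulting map to still be a homomorphism these borders must match edges of $D$. Carrying out this case analysis shows that one endpoint of $S$ must sit just past the position $\alpha_\Phi(1)$ (the location of the first increasing edge of $\Phi_i^M$, corresponding to the edge of $D$ out of vertex $i$), and the other endpoint must sit at some position $\alpha'(n)$ with $\alpha'(n) > \alpha_\Phi(n)$ and $x_{\alpha'(n)}$ matching the letter of $D$ specified in the fact. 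The resulting $\phi'$ then has selection function $(\alpha_\Phi(2), \ldots, \alpha_\Phi(n), \alpha'(n))$ realizing $\sigma^{i+1}(D)$ as a $*$-substring of $C$.

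For the translation, the forward direction is immediate: given such an edge, the concatenated sequence $(\alpha_\Phi(1), \alpha_\Phi(2), \ldots, \alpha_\Phi(n), \alpha'(n))$ is a strictly increasing selection witnessing the required $*$-substring relation, since its first $n$ entries already select $\sigma^i(D)$ and its final entry provides the needed trailing letter. For the converse, given any witnessing selection, leftmostness of $\alpha_\Phi$ forces the first $n$ entries to lie componentwise above $\alpha_\Phi$, so the last entry strictly exceeds $\alpha_\Phi(n)$ and can be taken as $\alpha'(n)$. I then construct $\phi'$ via $(\alpha_\Phi(2), \ldots, \alpha_\Phi(n), \alpha'(n))$ and invoke \Cref{lem:pushup} on the arc of vertices at value $i$ lying cyclically between $c_{\alpha_\Phi(1)}$ (exclusive) and $c_{\alpha'(n)}$ (inclusive) to certify the edge in $\Hom(C, D)$.

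The main obstacle is the careful cyclic bookkeeping, since the arc $S$ wraps around $c_0$: in the linear indexing $1, \ldots, m$ of $C$ the ``left'' endpoint of $S$ is large and the ``right'' endpoint is small, so one must track this carefully to verify both that the new selection function $\alpha'$ is strictly increasing (modulo the cyclic shift) and that the two border edges of $S$ become valid arcs in $D$ after the move. These border checks reduce to verifying that $x_{\alpha_\Phi(1)}$ and $x_{\alpha'(n)}$ both match the single relevant letter of $D$, which is guaranteed by $\alpha_\Phi$ selecting $\sigma^i(D)$ and by the witnessing selection for the $*$-substring condition.
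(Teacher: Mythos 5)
Your proposal is essentially the same argument the paper gives: the paper states no separate proof, and the two paragraphs of "discussion" immediately preceding the fact are meant to serve as the proof, which is exactly what you formalize — the moved arc $S$ containing $c_0$, the shift of the selection function with one value moved past $0$ modulo $m$, the leftmostness of $\alpha_{\Phi_i^M}$, and the certification via \Cref{lem:pushup}. So this is the right approach and the conclusion is correct.

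A few imprecisions worth flagging. You assert that both border edges of $S$ become non-stationary under $\phi'$; in fact only the trailing boundary edge (at position $\alpha'(n)$) becomes increasing, while the leading one (at position $\alpha_\Phi(1)$) becomes stationary (it was the first increasing edge of $\Phi_i^M$ and is absorbed into the new stationary arc at $i+1$). What truly forces $S$ to abut the first increasing edge is monotonicity of $\phi'$ — if $S$ were strictly interior to the stationary arc, $\phi'$ would have a cutback — not the homomorphism condition alone, so it would be cleaner to separate those two constraints: monotonicity pins down the forward endpoint, and the homomorphism condition then only constrains the single trailing boundary edge, which is what produces the trailing letter in the $*$-substring. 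The appeal to non-contractibility of $D$ is not needed at this step. Finally, your selection-function bookkeeping (the new selection is $(\alpha_\Phi(2),\dots,\alpha_\Phi(n),\alpha'(n))$ for $\sigma^{i+1}(D)$, the concatenation $(\alpha_\Phi(1),\dots,\alpha_\Phi(n),\alpha'(n))$ witnessing the length-$(n+1)$ string, and the converse via componentwise minimality of $\alpha_\Phi$) is correct and is the crux of the argument; be aware only that the trailing letter you need is the one labeling the edge $(i)(i+1)$ of $D$, consistent with the form $\sigma^i(\sqrt{D})^{wr}y_{i+1}$ used in the main theorem.
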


    We will strengthen this to say that, except in the exceptional case,  there is an up edge from $\Mon_1(C,D;i)$ to $\Mon_1(C,D;i+1)$ if and only
     if $\sigma^i(D)y_i \leq^* C$; but as we will need something a bit stronger than this, we prove it all at once.

    For any map $i$, let $\Mon_1^+(C,D;i)$ be the set of maps in $\Hom_1(C,D)$ whose push-up (defined after \Cref{uptomono2}) is in $\Mon_1(C,D;i)$. The graphs $\Mon_1^+(C,D;i)$ over all $i$ partition the vertices of $\Hom_1(C,D)$.   
    An {\em up path} $\phi_1 \dots \phi_\ell$ in $\Hom_1(C,D)$ is a path under which every vertex moves up-- by this we mean that for all vertices $c_i$ of $C$ the increase of the  path $\phi_1(c_i)\phi_2(c_i) \dots \phi(c_\ell)$ is non-negative.   A path of up edges is an up path, but the converse is not necessarily true.     

    \begin{proposition}\label{MonReduction}
       Let $C$ and $D$ be digraphs with $m > n$.   There is  an up edge out of $\Mon_1^+(C,D;i)$ 
       if and only if $\sigma^i(D)y_i \leq^* C$.
       Consequently, there is an up edge out of $\Mon_1(C,D;i)$ 
          if and only if $\sigma^i(D)y_i \leq^* C$.
    \end{proposition}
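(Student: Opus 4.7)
The plan is to reduce both biconditionals to \Cref{MonRedEasyWay}. The ``if'' direction and the ``consequently'' statement are essentially immediate: if $\sigma^i(D)y_i\leq^* C$, then \Cref{MonRedEasyWay} produces a monotone one-step up edge from $\Phi_i^M\in \Mon_1(C,D;i)\subseteq\Mon_1^+(C,D;i)$ to a vertex of $\Mon_1(C,D;i+1)\subseteq\Mon_1^+(C,D;i+1)$, witnessing an up edge out of both $\Mon_1(C,D;i)$ and $\Mon_1^+(C,D;i)$. Conversely, any up edge out of $\Mon_1(C,D;i)$ (in the sense of landing outside the push-up class $\Mon_1^+(C,D;i)$) is \emph{a fortiori} an up edge out of $\Mon_1^+(C,D;i)$, so the ``consequently'' reduces to the main claim.

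For the ``only if'' direction, suppose $\phi\phi'$ is an up edge with $\phi\in\Mon_1^+(C,D;i)$ and $\phi'\in\Mon_1^+(C,D;j)$ for some $j\neq i$. Let $\psi$ and $\psi'$ be the monotone push-ups of $\phi$ and $\phi'$; by definition they lie in $\Mon_1(C,D;i)$ and $\Mon_1(C,D;j)$. Passing to integer lifts $\tilde\phi,\tilde\phi'$ (so wind-$1$ becomes $\tilde\phi(v+m)=\tilde\phi(v)+n$), the up-edge condition forces $\tilde\phi\leq\tilde\phi'\leq\tilde\phi+1$ pointwise. The push-up can be characterised as the pointwise-minimal monotone non-decreasing wind-$1$ majorant in the lift (any monotone majorant dominates every iterated cutback fill-in), so the push-up operation is monotone under pointwise comparison, giving $\tilde\psi\leq\tilde\psi'\leq\tilde\psi+1$. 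Evaluating at $c_0$ then forces $j=i+1$.

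The remaining task is to extract $\sigma^i(D)y_i\leq^* C$ from the existence of monotone wind-$1$ maps $\psi\in\Mon_1(C,D;i)$ and $\psi'\in\Mon_1(C,D;i+1)$ with $\tilde\psi\leq\tilde\psi'\leq\tilde\psi+1$. Let $T^\ast=\{v\in V(C):\tilde\psi'(v)=\tilde\psi(v)+1\}$; it contains $c_0$. Since $\psi$ and $\psi'$ are monotone and each edge of $\Hom(C,D)$ changes height by at most $1$, each maximal cyclic block of $T^\ast$ must be entered by a stationary edge of $\psi$ and exited by an increasing edge of $\psi$ --- precisely the configuration of a monotone one-step up edge moving that block. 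Processing these blocks one at a time (each as a plateau of the current map) gives a chain of monotone one-step up edges from $\psi$ to $\psi'$; the step that displaces the $T^\ast$-block containing $c_0$ is a monotone one-step up edge from some $\psi^\ast\in\Mon_1(C,D;i)$ into $\Mon_1(C,D;i+1)$. Because $\Phi_i^M$ has leftmost selection function in $\Mon_1(C,D;i)$, any $*$-substring selection witnessing this crossing at $\psi^\ast$ can be shifted leftward to witness the analogous crossing from $\Phi_i^M$, which by \Cref{MonRedEasyWay} is equivalent to $\sigma^i(D)y_i\leq^* C$.

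The main obstacle is the combinatorial final step: translating the pointwise constraint $\tilde\psi\leq\tilde\psi'\leq\tilde\psi+1$ into an explicit chain of plateau-moves and then identifying the crossing move associated with $c_0$. The bookkeeping is delicate because the blocks of $T^\ast$ need not be stationary plateaus of $\psi$ globally; the decomposition must be performed inductively with respect to the current map. Step~1's monotonicity of push-up is conceptually clean but also requires care to verify in the cyclic/lifted setting.
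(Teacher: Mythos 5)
Your overall plan diverges from the paper's at the crucial ``only if'' step, and as written it has a genuine gap there. The paper does not pass to push-ups and build a chain; instead it uses \Cref{lem:refine} (via \Cref{MZ}) to refine the arbitrary up edge $\phi\phi'$ leaving $\Mon_1^+(C,D;i)$ into a path of non-refinable edges, each moving a \emph{single} vertex. Taking the last vertex of that path still in $\Mon_1^+(C,D;i)$, the edge moves exactly one vertex $c_a$ from $d$ to $d+1$. Comparing pointwise with $\Phi := \Phi_i^M$ (which dominates every map whose push-up is in $\Mon_1(C,D;i)$) forces $\Phi(c_a)=d$, after which a short local check via \Cref{lem:pushup} shows $\Phi$ itself has a one-step up edge; its push-up lands in $\Mon_1(C,D;i+1)$, and \Cref{MonRedEasyWay} finishes. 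The refinement step is what makes the local comparison with $\Phi_i^M$ work cleanly.

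The concrete gap in your argument is the claim that each maximal cyclic block of $T^\ast=\{v:\tilde\psi'(v)=\tilde\psi(v)+1\}$ is ``precisely the configuration of a monotone one-step up edge moving that block.'' That is false in general: a block of $T^\ast$ is bounded by a $\psi$-stationary entry edge and a $\psi$-increasing exit edge, but its interior can contain $\psi$-increasing edges, so the block is not a plateau and cannot be moved by a single application of \Cref{lem:pushup}. You flag the issue yourself (``the blocks of $T^\ast$ need not be stationary plateaus of $\psi$ globally'') but do not supply the inductive decomposition you invoke. One can repair this by moving sub-plateaus bottom-up inside each block, but this requires checking at each step that the new increasing edge has the right orientation to be a homomorphism -- exactly the kind of local verification the paper handles once, for a single vertex, in the claim inside its proof. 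Similarly, ``any $*$-substring selection witnessing this crossing at $\psi^*$ can be shifted leftward to witness the analogous crossing from $\Phi_i^M$'' is asserted without proof; it is plausible given that $\alpha_{\Phi_i^M}\leq\alpha_{\psi^*}$ pointwise, but \Cref{MonRedEasyWay} is stated only for $\Phi_i^M$, so you need either to justify this shift or to re-derive the substring conclusion directly from $\psi^*$'s crossing step. In short: your push-up monotonicity idea and the deduction $j=i+1$ are fine, but the chain construction and the transfer to $\Phi_i^M$ are both left as sketches where the paper gives short complete arguments by a different (refinement-based) route.
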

    \begin{proof}
      The `if' direction is immediate from the previous fact, as an up edge out of $\Mon_1(C,D;i)$ must clearly go to $\Mon_1(C,D;i+1)$. 
      For the other direction, assume that there is an up edge $\phi\phi'$ from $\Mon_1^+(C,D;i)$ to $\Mon_1^+(C,D;i+1)$. As $m > n$ we can refine this edge, 
      by Lemma \ref{lem:refine}, into a path of non-refinable edges; there is a last vertex on the path in $\Mon_1^+(C,D;i)$, and so we may assume that
      $\phi\phi'$ moves a single vertex $c_a$, and must move it up, or $\phi'$ would also be in $\Mon_1^+(C,D;i)$.
       Let $d = \phi(c_a)$, so $\phi'(c_a) = d+1$.  

       In the following arguments, we use `$\leq$' to compare values in $\{d-1, d, d+1\} \subset V(D)$, so it is well-defined by 
       $d-1 \leq d \leq d+1$.  
       Let $\Phi = \Phi_M(i)$. As $\phi$ is in $\Mon_1^+(C,D;i)$ but $\phi'$ is not,  we have  $\phi(c_j) \leq \Phi(c_j)$ for all $j \in \{ a-1, a, a+1 \}$, 
       but this is not true of $\phi'$ in place of $\phi$, and so $\phi(c_a) \leq \Phi(c_a) < \phi'(c_a)$ which means that $\Phi(c_a) = d$.

       \begin{claim} $\Phi$ is adjacent to the map $\Phi'$ we get by moving $c_a$ up to $d+1$. 
       \end{claim}
       \begin{proof}\claimproof
             By \Cref{lem:pushup}, it is enough to show that $\Phi'$ is a homomorphism, and so is  enough to show that it maps the edges $e_{a-1}=c_{a-1}c_a$ and $e_a = c_ac_{a+1}$ to edges of $D$.  It maps $e_{a-1}$ to an edge as it maps it to the same place as $\phi'$ does. Indeed, both map $c_a$ to $d+1$; and $c_{a-1} \sim c_a$ we have $\phi'(c_{a-1}) \geq \phi'(c_a) - 1 =d$ and as $\Phi$ is monotone we have $\phi'(c_{a-1}) = \phi(c_{a-1}) \leq \Phi(c_{a-1}) \leq \Phi(c_a) = d$.

             Similar considerations give  $d \leq \phi(c_{a+1} = \phi'(c_{a+1}) \leq  \Phi'(c_{a+1}) \leq d+1$.  If $\Phi'(c_{a+1}) = d$ then $\Phi'$ is the same on  $c_ac_{a+1}$ as $\phi'$, and if $\Phi'(c_{a+1}) = d+1$, then $\Phi'$ maps $c_ac_{a+1}$ to a loop.
             \end{proof}

        The claim shows that $\Phi$ has an edge up to $\Phi'$. Where $c_b$ was the first vertex of $C$ above $c_a$ with $\Phi(b) = d+1$, we get the monotone push-up $\Phi''$ of 
       $\Phi'$ by moving  $c_{a+1} \dots c_{b-1}$ from $d$ up to $d+1$.  As this is a homomorphism, and we get it from $\Phi$ by moving up $c_a \dots c_{b-1}$, 
        we have by \Cref{lem:pushup} that $\Phi\Phi''$ is a  monotone one-step up edge. As $\Phi$ is the maximum map in $\Mon_1(C,D;i)$ we have that $\Phi''$ is in $\Mon_1(C,D;i+1)$, and  so by the previous fact we have $\sigma^i(D)y_i \leq^* C$, as needed. 
        \end{proof}

    With all the bits in order, we are now ready to prove \Cref{thm:wind_components}. 

     \begin{proof}
         Let $C$ and $D = \sqrt{D}^r$ be reflexive digraph cycles such that $D$ is non-contractible, and let $R$ be the maximum integer for which
         $\sigma^i(\sqrt{D}^R) \leq^* C$ for some $i$.  
 
         If $w = 0$ then by Fact 3.1 $\Mon_0(C,D)$ is $D$, and by \Cref{lem:pushup} everything in $\Hom_0(C,D)$ has a path up and down to $\Mon_0(C,D)$, so is 
         connected and cyclic. Thus part (1) of the theorem is proved. 

         As  $\Hom_w(C,D) = \Hom_1(C,wD)$  when $w \geq 2$, we may therefore assume that $w =1$. 
         prove the result for $\Mon_1(C,D)$ in place of $\Hom_1(C,D)$.

        If $\sigma^i(\sqrt{D})^r y_{i+1} \leq^* C$ for all shifts $\sigma^i$ of $\sqrt{D}$, as certainly happens when $1 < R/r$, then $\sigma^i(D)y_{i+1} \leq^* C$ for all shifts $\sigma^i$ of $D$,  and so by \Cref{MonRedEasyWay} the maximal vertex $\Phi_i^M$ of $\Mon_1(C,D;i)$ has an up edge to $\Mon_1(C,D;i+1)$, for each $i$.  
      Thus $\Mon_1(C,D)$  is connected and cyclic. By \Cref{uptomono2} $\Hom_1(C,D)$ is also connected and cyclic,  so part (2) of the theorem is proved. 
        
        For part (3) of the theorem, consider first the exceptional case that  $D$ is symmetric and $C$ is a directed cycle. If $m = n$ then all wind one maps are monotone, so           $\Hom_1(C,D;i) = \Mon_1(C,D;i)$ for all $i$,  and this graph contains exactly one map, $\phi_i$. By \Cref{lem:refine} $\Mon_1(C,D)$ is a backwards   directed cycle if $C$ is forwards directed, and is a forwards directed cycle if $C$ is backwards directed; either way, it has a single cyclic component, as needed.    When $m > n$ the exceptional case falls into  case (2) of the theorem, and so the exceptional case is proved.  

        We may therefore assume that $C$ and $D$ are not in the exceptional case.  By the assumptions of part (3) there is at least one maximal interval $I  = \{a,a+1, \dots, b\}\subset [n]$ such that $b$ is the only element $i$ in $I$ that does not satisfy $\sigma^i(\sqrt{D})^r y_{i+1} \leq^* C$.  
      We have by \Cref{MonRedEasyWay} and \Cref{uptomono2} that $X_I:= \cup_{\alpha \in I} \Mon^+_1(C,D;i)$ is connected for any such interval $I$
         and by \Cref{MonReduction} that $X_I$ has no up edges out of $X_I$.     As the same holds for the other such intervals $I'$, it also has no down edge, and so $X_I$ is a component of $\Hom_1(C,D)$.  
        Moreover, the maximal element $\Phi_j^M$ of $\Mon_1(C,D)$ has no up edges, as they would be in $\Mon^+_1(C,D;j+1)$,  and so $X_I$ is not cyclic.  
     \end{proof}

\section{Algorithms}\label{sec:Algorithms}

      In \cite{BLS21}, a polynomial time algorithm was given to solve the problem $\Recon(D)$ for a reflexive digraph $D$.   
      There was no effort in that paper to optimise this algorithm, but when applied to a cycle instance $(C, \phi, \psi)$ of size $|V(C)| = m$, it would essentially 
      move vertices greedily up towards their image under $\psi'$.  If this was not possible, it would try to get to $\psi$ by moving vertices greedily down. 
      The most a single vertex might have to move was a distance of about $2m$, and moving several vertices up by one, one had to check again what vertices 
      would be able to move by computing a graph $\Aux(\phi)$, much like the graph $\Neq(\phi,\phi')$ for some imagined up-neighbour $\phi'$ of $\phi$.
      Quantifying the running time of this algorithm, one could easily upper bound the running time as $O(m^3)$ but the lower bound would not beat  $\Omega(m^2)$.
      The algorithm would take space $O(m)$, having to keep track of a map of $C$ to $D$ at all times.  

      We show that we can solve the problem for cyclic instances $(C, \phi, \psi)$  of size $m$ in time $O(m)$ and space $O(\log(m))$. 
      Recall that $D$ is not part of the instance, so it can be assumed that $\sqrt{D}$ is given. While the size $s$ of $\sqrt{D}$ and 
      the size $n$ of $D$ are constant for the reconfiguration problem, we first consider the running time of the various algorithms in $m$ and in $n$ or $s$.

    \begin{fact}
        Given a digraph cycle $D$ of length $n$, we can find $\sqrt{D}$  and the value $r$ such that $D = \sqrt{D}^r$ in time $O(n^{3/2})$. 
    \end{fact}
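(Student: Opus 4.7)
The plan is to exploit the characterization of $\sqrt{D}$ given earlier: finding $\sqrt{D}$ amounts to finding the minimum $s \geq 1$ with $\sigma^s(D) = D$, and then setting $r = n/s$. The key observation is that if $D = \sqrt{D}^r$ then the length $s$ of $\sqrt{D}$ must divide $n$, so it is enough to consider candidate periods $s$ among the divisors of $n$.

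First, I would enumerate the divisors of $n$ in increasing order. This can be done naively by trial division in time $O(\sqrt{n})$, producing at most $O(\sqrt{n})$ divisors, since divisors pair up as $(d, n/d)$ with one of each pair at most $\sqrt{n}$. Second, for each divisor $s$ (processed from smallest to largest), I would test whether the orientation string $y_1 y_2 \dots y_n$ equals its own $s$-shift, i.e. whether $y_i = y_{((i+s-1) \bmod n)+1}$ for every $i \in [n]$. Each such equality test traverses the string once and takes time $O(n)$. Return the first $s$ for which the test succeeds together with $r = n/s$.

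The correctness follows because $\sqrt{D}^r = D$ exactly when the string of length $s = n/r$ formed by the first $s$ letters of $D$ is repeated $r$ times, which is equivalent to $\sigma^s(D) = D$; and the smallest such $s$ (which necessarily divides $n$) yields the primitive root. The running time is $O(\sqrt{n})$ divisors times $O(n)$ per test, giving $O(n^{3/2})$ overall, dominating the $O(\sqrt{n})$ divisor enumeration.

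There is no real obstacle here: the only subtle point is confirming that the minimum $s$ with $\sigma^s(D) = D$ must divide $n$, so that restricting the search to divisors is both safe and sufficient. This follows from a standard argument: if $s$ is the smallest period, then $\gcd(s,n)$ is also a period by the Fine--Wilf style reasoning applied to the cyclic string, forcing $s \mid n$ by minimality. I note that one could reduce the running time to $O(n)$ using the KMP failure function, but the statement only asks for $O(n^{3/2})$, so the elementary divisor-sweep above suffices.
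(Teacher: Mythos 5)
Your proof is correct and follows essentially the same approach as the paper: reduce to finding the smallest shift $s$ with $\sigma^s(D)=D$, restrict the search to divisors of $n$, and test each candidate in $O(n)$ time for a total of $O(n^{3/2})$. You are slightly more careful than the paper's phrasing (which says to check $i\le\sqrt n$ rather than explicitly enumerating both members of each divisor pair $(d,n/d)$), and you correctly flag and justify the point that the minimal period must divide $n$, but the underlying algorithm and its analysis are the same.
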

    \begin{proof}
       As observed above, $r$ is $n/i$ where $i$ is the smallest positive integer for which $\sigma^i(D) = D$.  
       As $i$ must divide $n$, it is enough to compute $\sigma^i(D)$ for $i \leq \sqrt{n}$ and compare it to $D$, which 
       takes time $O(n)$ for each of at most $\sqrt{n}$ values of $i$.  
    \end{proof}

     With a couple variations of this simple algorithm, we get the following.    
     
    \begin{lemma}\label{lem:Complexity}
    Let $\sqrt{D}$ be a primitive cycle of fixed length $s$.  For a  given digraph cycle $C$ of length $m$, we have the following.
      \begin{enumerate} 
           \item  We can find the largest integer $R$ such that 
      $\sqrt{D}^R \leq^* C$ in time $O(m)$ and in space $O(\log(m))$.  
           \item We can find the largest integer $R$ such that $\sigma^i(\sqrt{D})^R \leq^* C$ for some $i \in \{0, \dots, s-1\}$ in time $O(sm)$ and space $O(\log(m))$.
           \item For given $r$, we can find the set $\Gamma$ of $i$ such that $\sigma^i(\sqrt{D})^r y_{i+1} \leq^* C$ in time $O(sm)$ and space $O(s\log(s) + \log(m))$.  
      \end{enumerate}
      \end{lemma}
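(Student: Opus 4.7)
The plan is to build all three algorithms out of a single greedy scanning subroutine and then do a routine resource count.

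I would first define a subroutine \textsc{Greedy}$(T,C)$ taking a template $T = t_1 \dots t_s$ (to be matched cyclically) and the input $C = x_1 \dots x_m$. It maintains a pointer $j \in \{1, \dots, s\}$, initially $1$, and a counter $R$, initially $0$. It scans $C$ left to right; upon reading $x_k$, if $x_k$ and $t_j$ agree (with $*$ compatible with any letter), it advances $j$ by one, and if $j$ would exceed $s$, it resets $j \leftarrow 1$ and increments $R$. At the end it returns $R$ (and, for part~(3), also the residual value of $j$). The first step is to verify that \textsc{Greedy} returns exactly $\max\{R : T^R \leq^* C\}$. This is a standard exchange argument: given any selection function $\alpha^*$ witnessing $T^{R^*} \leq^* C$, one shows by induction on $k$ that the greedy selection $\alpha$ satisfies $\alpha(k) \leq \alpha^*(k)$, and hence greedy completes at least $R^*$ cycles before running out of $C$.

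For part (1), I would simply invoke \textsc{Greedy}$(\sqrt{D},C)$. Since $s$ is fixed, $j$ occupies $O(1)$ bits; the counter $R$ and the read position in $C$ each occupy $O(\log m)$ bits; the total time is $O(m)$ and the space is $O(\log m)$.

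For part (2), I would loop $i$ from $0$ to $s-1$, run \textsc{Greedy}$(\sigma^i(\sqrt{D}),C)$, and maintain a running maximum of the returned $R$. Reusing the workspace across the $s$ calls keeps space at $O(\log m)$ while the total time is $O(sm)$.

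For part (3), for each $i \in \{0, \dots, s-1\}$ I would run a slight variant: scan $C$ greedily matching $\sigma^i(\sqrt{D})$ as in \textsc{Greedy}, but once the counter reaches $r$, switch to demanding one additional matching letter $y_{i+1}$; the outcome is a single bit recording whether $i \in \Gamma$. Each of the $s$ runs costs $O(m)$ time in $O(\log m)$ working space, and $\Gamma$ itself is output as a list of up to $s$ indices of size $O(\log s)$, fitting in the stated $O(s\log s + \log m)$ space, with total time $O(sm)$. The only real content is the correctness of \textsc{Greedy} in the $*$-substring setting; once the exchange argument is in place, the complexity bookkeeping for all three parts is automatic.
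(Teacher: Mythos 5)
Your proposal is correct and uses essentially the same greedy single-pass scan as the paper: a modular pointer into the template, a counter, and an $O(\log m)$-space running state, with the outer loop over the $s$ shifts for parts (2) and (3). The only substantive addition on your side is that you spell out \emph{why} the greedy scan is optimal (the exchange argument showing $\alpha(k) \le \alpha^*(k)$ by induction), which the paper's proof takes for granted; this is a genuine improvement in rigor, and the argument does go through in the $*$-substring setting because compatibility of a single template letter $t_j$ with a scanned letter $x_k$ depends only on $t_j$ and $x_k$, so the induction is unaffected by the wildcards. One small imprecision worth fixing: you write that $*$ is ``compatible with any letter,'' but the $\le^*$ relation is asymmetric --- a $*$ in the \emph{template} matches anything, while a $*$ in $C$ must not be matched against a non-$*$ template letter; your algorithm should test $t_j \le^* x_k$, not symmetric agreement. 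Aside from that, your handling of part (3) (run the counter to $r$ full cycles, then demand one more match of $y_{i+1}$) is equivalent to the paper's ``$c$ reaches $rs+1$'' test, since the $(rs+1)$-st letter of the cyclic repetition of $\sigma^i(\sqrt{D})$ is precisely $y_{i+1}$.
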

    \begin{proof}
        
       Let $d$, initialised to $d  = 1$, be a pointer pointing at the index of an edge of $\sqrt{D} = y_{1}y_{2} \dots y_p$ that we are looking for, and let $c$, initialised to $c = 0$, be a counter of the number of edges of $\sqrt{D}$ we have found.
        For $\alpha = 1, \dots, m$, if we have  $y_{d} \leq^* x_\alpha$,  then increment $d$ modulo $p$ to the next edge of $P$, and increment $c$ by one.
        From the resulting $c$, we get $R  = \ceil{c/p}$. This takes time $O(m)$, and the integer $c$ is bounded by $m$, so can be held in space logarithmic in $m$.
        This  gives  part (1) of the lemma. 

        For part (2) of the lemma, we run  this algorithm on $\sigma^i(\sqrt{D})$ instead of $\sqrt{D}$ for each $i = 1, \dots p$. 
        Keeping only the maximum value $R_{\rm max}$ of the values $R$ returned for each $i$, before we increment $i$ we compare $R$ with $R_{\rm max}$ and 
        let $R_{\rm max}$ be the greater value.  This takes time $O(pm) = O(m)$ and space $O(\log(m))$. 

        For part (3) we run the algorithm on $\sigma^i(\sqrt{D})$ but instead of computing $R$ for each $i \in \{0, \dots, s-1\}$ we put $i$ in the set $\Gamma$
        if $c$ reaches $rs+1$ This takes time $O(sm)$ and as each index in $\Gamma$ takes space at most $\log(s)$, it takes space  $O(s\log(s) + \log(m))$.

    \end{proof}

    \begin{proposition}\label{Complexity}
      Given a digraph cycle $D = \sqrt{D}^r$ where $\sqrt{D}$ is a primitive non-contractible cycle, 
      we can solve an instance $(C,\phi,\psi)$ of $\Recon(D)$ of size $m$ in time $O(m)$ and space $O(\log m)$.  
    \end{proposition}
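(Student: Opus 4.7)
The plan is to reduce the reconfiguration question to a case-check in Theorem \ref{thm:wind_components}, using the algorithms of Lemma \ref{lem:Complexity} as the main engine, and to handle the one remaining non-trivial case with a log-space routine that locates each of $\phi,\psi$ inside its component. In detail, a single pass through $C$ computes the increase of $\phi$ as a running integer bounded by $m$ (hence $O(\log m)$ bits), from which $w(\phi)=(\text{increase})/n$ is obtained; the same is done for $\psi$. By Fact \ref{WindInvariance}, if $w(\phi)\neq w(\psi)$ we return NO.

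Set $w:=w(\phi)$; by symmetry (reversing $C$) we may assume $w\ge 0$. Using \Cref{lem:Complexity}(2) compute $R$ in time $O(m)$ and space $O(\log m)$. If $w=0$, or $w<R/r$, we are in case (1) or (2) of Theorem \ref{thm:wind_components} and return YES. If $w=R/r$, invoke \Cref{lem:Complexity}(3) (with $r$ replaced by $wr$, justified by Remark \ref{1isenough}) to compute $\Gamma:=\{i\in[s]:\sigma^i(\sqrt{D})^{wr}y_{i+1}\leq^* C\}$; if $\Gamma=[s]$ return YES. Otherwise we are in case (3), and the components $X_{I_1},\dots,X_{I_c}$ of $\Hom_w(C,D)$ are indexed by maximal intervals $I\subset[n]$ whose last element has residue mod $s$ in $[s]\setminus\Gamma$. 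Since $|\Gamma^c|\le s=O(1)$ we can store $\Gamma^c$ entirely within our space budget.

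It remains to determine, in $O(m)$ time and $O(\log m)$ space, which interval contains each of $\phi,\psi$. Because a map lies in $X_I$ exactly when its monotone push-up lies in $\Mon_1^+(C,D;i)$ for some $i\in I$, it suffices to compute $i_\phi:=\phi^\uparrow(c_0)\in[n]$. Lift $\phi$ to $\widetilde{\phi}:\mathbb{Z}\to\mathbb{Z}$ on the universal covers, so that $\widetilde{\phi}(j+m)=\widetilde{\phi}(j)+wn$. The iterated cutback push-ups of Lemma \ref{uptomono} produce, in the lift, the smallest non-decreasing function dominating $\widetilde{\phi}$, namely the upper envelope $\widetilde{\phi}^\uparrow(j)=\max_{k\le j}\widetilde{\phi}(k)$; and when $w\geq 1$ the periodicity $\widetilde{\phi}(k-lm)=\widetilde{\phi}(k)-lwn$ shows the supremum is attained in a single period, giving
\[
\widetilde{\phi}^\uparrow(0)\;=\;\max_{j\in[1,m]}\widetilde{\phi}(j)\;-\;wn.
\]
The right-hand side is computed by one scan through $C$ maintaining the running sum $\widetilde{\phi}(j)=\widetilde{\phi}(0)+\sum_{i\le j}\iota_i$ (where $\iota_i\in\{-1,0,+1\}$ is the increase of the $i$-th edge of $C$) and its maximum; both registers stay bounded in absolute value by $m+n=O(m)$ and therefore fit in $O(\log m)$ bits. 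Reducing mod $n$ yields $i_\phi$; a walk of length at most $s$ through the stored $\Gamma^c$ finds its interval $I_\phi$. The same procedure yields $I_\psi$, and we return YES iff $I_\phi=I_\psi$.

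The main obstacle is justifying the displayed formula for $\widetilde{\phi}^\uparrow(0)$, and more broadly the identification of the upper envelope with the iterated cutback push-up of \Cref{uptomono}. The verification is that push-up of a cutback $c_a\dots c_b$ coincides in the lift with replacing $\widetilde{\phi}$ on $[a,b]$ by $\max$ of its endpoints; iterating exhausts all local dips and produces precisely the running-max function, which is the unique minimal non-decreasing majorant. One must additionally check that for $w=0$ this formula degenerates correctly (the push-up is the constant map $\max_j\phi(c_j)$, and case (1) of Theorem~\ref{thm:wind_components} says there is a unique component anyway, so Step 3 is unnecessary), and that in the exceptional case handled in Theorem \ref{thm:wind_components} the single cyclic component is still detected by the preceding checks. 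Everything else, including reading $D$ and $\sqrt{D}$ off the constant-size input describing the target, is bookkeeping in $O(1)$ space, so the overall bounds are $O(m)$ time and $O(\log m)$ space as claimed.
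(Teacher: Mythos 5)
Your proof follows the same three-phase plan as the paper's own: compute the winds (reject if unequal), compute $\Gamma$ via \Cref{lem:Complexity}, then decide membership in a common component. The genuine difference is in the last phase. The paper's proof tests whether the residues lying between $\phi(c_0)$ and $\psi(c_0)$ all lie in $\Gamma$; but case (3) of \Cref{thm:wind_components} places $\phi$ in the component $X_I$ according to $\phi^\uparrow(c_0)$, the base-point image of the \emph{monotone push-up}, and for non-monotone $\phi$ this need not equal $\phi(c_0)$. You compute $\phi^\uparrow(c_0)$ explicitly, via the identity $\widetilde{\phi}^\uparrow(0)=\max_{j\in[1,m]}\widetilde{\phi}(j)-wn$ in the lift, which is correct for $w\geq 1$ (the periodicity pushes each earlier period down by at least $n$) and realisable by a single running-sum/running-max scan in $O(\log m)$ space. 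The identification of the iterated cutback push-up with the running maximum is the right way to see this, and you verify it. In short, you take the same route but make the component test rigorous where the paper's write-up is loose; this is a worthwhile refinement, since $\phi(c_0)$ and $\phi^\uparrow(c_0)$ can land in different residue classes when $\phi$ oscillates.

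There is a real gap you acknowledge but do not close: the exceptional case of \Cref{thm:wind_components} with $m=n$. There $D=\ast^n$, $\sqrt D=\ast$, $s=1$, $r=n$, and $C=\pm^m$, so $R=m=n$ and $w=R/r=1$, yet $\ast^{n+1}\nleq^\ast C$ so $\Gamma=\emptyset$. Your step 3 does not fire ($w\not<R/r$), your step 4 does not fire ($\Gamma\neq[s]$), and your step 5 then declares every singleton $\{i\}$ its own interval and answers NO whenever $i_\phi\neq i_\psi$, contradicting the theorem's assertion of a single cyclic component. So the claim that ``the preceding checks'' catch the exceptional case is incorrect; you need an explicit constant-time short-circuit: when $\sqrt D=\ast$, $C$ is a directed cycle and $m=n$, answer YES immediately. (The paper's own one-line test has the identical blind spot, so you are in good company, but a careful write-up should fix it.)
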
     
    \begin{proof}
       First we determine the winds of the maps $\phi$ and $\psi$; this clearly takes time $O(m)$ and space $O(\log(m))$.
       If the winds are the same value $w$ for both maps, then we find the set $\Gamma$ of values of $i$ such that $\sigma^i(\sqrt{D}^{rw}) \leq^* C$;
       by part (3) of \Cref{lem:Complexity}, this takes time $O(m)$ and space $O(\log(m))$.  To decide if $\phi$ reconfigures to $\phi'$ we check 
       if all values of $i$ between $\phi(c_0)$ and $\psi(c_0)$ (in either direction) are in $\Gamma$.  This takes constant time and space.
    \end{proof}

\section{Concluding Remarks}

 We showed that the version of $\Recon(D)$ for a reflexive digraph cycle $D$, where we consider only cyclic instances, is solvable in log-space. 
 
 A natural next question is of whether the problem $\Recon(D)$ can be solved in log-space for more general instances.  From \cite{BLS21} it is enough to check the conditions for a single cycle instance on every cycle in a basis of the cycle space.
 However, the bookkeeping required to check every cycle of a cycle space seems to require more than log-space.

Another natural question is about more general digraph cycles.   From results about the reconfiguration problem $\Recon(H)$ for more general digraph targets $H$ in \cite{LMS} one gets that $\Recon(D)$ is also polynomial time solvable when $D$ is an irreflexive cycle.  We expect the techniques of the present paper could be used to show that this problem is also in log-space for cyclic instances; however, a characterisation seems messier.

 Finally, we note that in the recent paper \cite{FIKNS}, the authors consider a similar problem for irreflexive graphs, but replacing the graph $\Hom(G,D)$ with a simplicial complex;  they show that the homotopy type of each component is contractible or a wedge of spheres. Along these lines, for our setting, one might consider the homotopy type of the transitive tournament complex of $\Hom(C,D)$-- the complex whose simplices are transitive tournaments.  We expect that in \cref{thm:wind_components},
 our non-cyclic components yield contractible simplicial complexes, and that our cyclic components yield simplicial complexes that are homotopic to $S^1$.

\section*{Acknowledgements}

  Both authors are indebted to Beno\^{i}t Larose for his advice and support throughout the project, and for introducing us to each other and to the problem.


\begin{thebibliography}{5}

\providecommand{\bysame}{\leavevmode\hboxto3em{\hrulefill}\thinspace}
\newcommand{\doi}[1]{\href{http://dx.doi.org/#1}{\small\nolinkurl{DOI: #1}}}
\renewcommand{\url}[1]{\href{https://arxiv.org/abs/#1}{\small\nolinkurl{arXiv: #1}}}

\providecommand{\natexlab}[1]{#1}


\bibitem[Brewster et~al.(2018)Brewster, Lee, and Siggers]{BLS18}
R.~Brewster, J.~Lee, and M.~Siggers.
\newblock Recolouring reflexive digraphs.
\newblock \emph{Discrete Math.}, 341\penalty0 (6):\penalty0 1708--1721, 2018.
\newblock \doi{10.1016/j.disc.2018.03.006}.

\bibitem[Brewster et~al.(2021)Brewster, Lee, and Siggers]{BLS21}
R.~Brewster, J.~Lee, and M.~Siggers.
\newblock Reconfiguration of homomorphisms to reflexive digraph cycles.
\newblock \emph{Discrete Math.}, 344\penalty0 (8):\penalty0 112441, 2021.
\newblock \doi{10.1016/j.disc.2021.112441}.


\bibitem[Fujii et~al.(2024)Fujii, Iwamasa, Kimura, Nozaki, and Suzuki]{FIKNS}
Soichiro Fujii, Yuni Iwamasa, Kei Kimura, Yuta Nozaki, and Akira Suzuki.
\newblock {Homotopy types of Hom complexes of graph homomorphisms whose
  codomains are cycles}.
\newblock \emph{arXiv}, August 2024.
\newblock \doi{10.48550/arXiv.2408.04802}.

\bibitem[Larivi{\ifmmode\grave{e}\else\`{e}\fi}re
  et~al.(2024)Larivi{\ifmmode\grave{e}\else\`{e}\fi}re, Larose, and
  Pullas]{LLP24}
Isabelle Larivi{\ifmmode\grave{e}\else\`{e}\fi}re,
  Beno{\ifmmode\hat{\imath}\else\^{\i}\fi}t Larose, and David
  E.~Pazmi{\ifmmode\tilde{n}\else\~{n}\fi}o Pullas.
\newblock {Surjective polymorphisms of directed reflexive cycles}.
\newblock \emph{Algebra Universalis}, 85\penalty0 (1):\penalty0 1--28, February
  2024.
\newblock ISSN 1420-8911.
\newblock \doi{10.1007/s00012-023-00834-4}.

\bibitem[L\'{e}v\^{e}que et~al.(2023)L\'{e}v\^{e}que, M\"{u}hlenthaler, and
  Suzan]{LMS}
Benjamin L\'{e}v\^{e}que, Moritz M\"{u}hlenthaler, and Thomas Suzan.
\newblock {Reconfiguration of Digraph Homomorphisms}.
\newblock In Petra Berenbrink, Patricia Bouyer, Anuj Dawar, and
  Mamadou~Moustapha Kant\'{e}, editors, \emph{40th International Symposium on
  Theoretical Aspects of Computer Science (STACS 2023)}, volume 254 of
  \emph{Leibniz International Proceedings in Informatics (LIPIcs)}, pages
  43:1--43:21, Dagstuhl, Germany, 2023. Schloss Dagstuhl -- Leibniz-Zentrum
  f{\"u}r Informatik.
\newblock ISBN 978-3-95977-266-2.
\newblock \doi{10.4230/LIPIcs.STACS.2023.43}.

\bibitem[Mar{\ifmmode\acute{o}\else\'{o}\fi}ti and
  Z{\ifmmode\acute{a}\else\'{a}\fi}dori(2012)]{MZ12}
M.~Mar{\ifmmode\acute{o}\else\'{o}\fi}ti and
  L.~Z{\ifmmode\acute{a}\else\'{a}\fi}dori.
\newblock {Reflexive digraphs with near unanimity polymorphisms}.
\newblock \emph{Discrete Math.}, 312\penalty0 (15):\penalty0 2316--2328, 2012.
\newblock \doi{10.1016/j.disc.2012.03.040}.

\end{thebibliography}
\end{document}